\newtheorem{theorem}{Theorem}[section]
{Proposition}
\newtheorem{definition}{Definition}[section]
\newtheorem{lemma}[theorem]%
{Lemma}
\newtheorem{corollary}[theorem]%
{Corollary}
\begin{document}

\title{Remoteness, order, size and connectivity constraints in digraphs}

\author[]{Sufiyan Mallu}
\affil[]{University of Johannesburg\\
South Africa}

\maketitle

\begin{abstract}

Let \( D \) be a strongly connected digraph. The average distance of a vertex \( v \) in \( D \) is defined as the arithmetic mean of the distances from \( v \) to all other vertices in \( D \). The remoteness \( \rho(D) \) of \( D \) is the maximum of the average distances of the vertices in \( D \). 

In this paper, we provide a sharp upper bound on the remoteness of a strong digraph with given order, size, and vertex-connectivity. We then characterise the extremal digraphs that maximise remoteness among all strong digraphs of order \(n\), size at least \(m\), and vertex-connectivity \(\kappa\). Finally, we demonstrate that the upper bounds on the remoteness of a graph given its order, size, and connectivity constraints (see \cite{DanMafMal2025}) can be extended to a larger class of digraphs containing all graphs, the Eulerian digraphs.

\end{abstract}

Keywords: Remoteness; transmission; average distance; size; vertex-connectivity, edge-connectivity, strong digraphs \\[5mm]
MSC-class: 05C12

\section{Introduction}

While distances in graphs have been the subject of extensive study, the investigation of distances in digraphs remains relatively underdeveloped. In particular, the concepts of proximity and remoteness have not been explored as thoroughly in the directed setting as they have in the undirected case. Ai et al. \cite{AiGerGutMaf2021} were the first to extend these notions to digraphs, establishing foundational results. This paper aims to contribute to the growing literature on distances in digraphs by providing new results on the maximum remoteness of strong digraphs under certain constraints. To the best of our knowledge, this is the second paper to focus specifically on the study of remoteness in the context of directed graphs.\\

Let $D$ be a strongly connected, finite digraph of order $n$. In a digraph $D$ of order at least two, the average distance $\overline{\sigma}(v, D)$ of a vertex $v$ is defined as the arithmetic mean of the directed distances from $v$ to all other vertices of $D$, that is,
\[
\overline{\sigma}(v, D) = \frac{1}{n-1} \sum_{w \in V(D)} d_D(v, w),
\]
where $d_D(v, w)$ denotes the length of the shortest directed path from $v$ to $w$.  
The \emph{remoteness} $\rho(D)$ of a strongly connected digraph $D$ is then defined as the maximum of the average distances of its vertices, namely
\[
\rho(D) = \max_{v \in V(D)} \overline{\sigma}(v, D),
\]
where $V(D)$ is the vertex set of $D$.  

\medskip
The above definition can be viewed as the directed analogue of the notion of remoteness in graphs.\\

The term \emph{remoteness} was first appeared in a paper titled \emph{automated comparison of graph invariants} by Aouchiche, Caporossi, and Hansen \cite{AouCapHan2007}, and has since gained widespread usage. Nevertheless, the concept and related ideas had been explored earlier under different terminology. For instance, Zelinka \cite{Zel1968} investigated the \emph{vertex deviation}, defined as $\frac{\sigma(v, G)}{n}$, where $\sigma(v, G)$ is the sum of distances from vertex $v$ to all other vertices in the graph, and $n$ is the total number of vertices. Other researchers have referred to $\sigma(v, G)$ using terms such as \emph{transmission} (e.g.,\cite{Ple1984}), \emph{total distance}, or simply \emph{distance}.\\

Bounds on remoteness in terms of order only were given by Zelinka \cite{Zel1968} and later, independently, by Auochiche and Hansen \cite{AouHan2011}. 

\begin{theorem}\label{Zel1968AouHan2011}
{\rm (Zelinka \cite{Zel1968}, Aouchiche, Hansen \cite{AouHan2011})} \\
Let $G$ be a connected graph of order $n\geq 2$. Then
\[ \rho(G) \leq \frac{n}{2}, \]
with equality if and only if $G$ is a path.
\end{theorem}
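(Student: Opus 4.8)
The plan is to bound the transmission $\sigma(v) = \sum_{w \in V(G)} d_G(v,w)$ of an arbitrary vertex $v$ and show it never exceeds $\binom{n}{2} = \frac{n(n-1)}{2}$; dividing by $n-1$ and taking the maximum over $v$ then yields $\rho(G) \le \frac{n}{2}$. Fix $v$ and list the distances from $v$ to the $n$ vertices of $G$ in nondecreasing order as $b_0 \le b_1 \le \cdots \le b_{n-1}$, where $b_0 = d_G(v,v) = 0$. The first step is to prove the pointwise estimate $b_i \le i$ for every $i$.

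To establish $b_i \le i$, I would exploit connectivity through a shortest-path (BFS) layering argument. Let $e$ be the eccentricity of $v$ and let $u$ be a vertex with $d_G(v,u) = e$. A shortest $v$–$u$ path contains a vertex at each distance $0, 1, \ldots, e$, so for every $j \le e$ the ball $\{w : d_G(v,w) \le j\}$ contains at least $j+1$ vertices, while for $j \ge e$ it contains all $n$. Consequently at most $n - j - 1$ vertices lie at distance greater than $j$, which translates exactly into $b_i \le i$ for all $i$. Summing gives $\sigma(v) = \sum_{i=0}^{n-1} b_i \le \sum_{i=0}^{n-1} i = \frac{n(n-1)}{2}$, and hence $\overline{\sigma}(v,G) \le \frac{n}{2}$; as $v$ was arbitrary, $\rho(G) \le \frac{n}{2}$.

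For the equality characterisation, suppose $\rho(G) = \frac{n}{2}$ and let $v$ attain this maximum. Then $\sum_i b_i = \sum_i i$ forces $b_i = i$ for every $i$, so the distances from $v$ to the vertices of $G$ are precisely $0, 1, 2, \ldots, n-1$, each occurring once. Writing $w_i$ for the unique vertex with $d_G(v,w_i) = i$, I would argue that each $w_i$ with $i \ge 1$ must have a neighbour at distance $i-1$ from $v$, and the only such vertex is $w_{i-1}$; hence $w_{i-1}w_i \in E(G)$ and $w_0 w_1 \cdots w_{n-1}$ is a Hamiltonian path. Finally, an edge $w_i w_j$ with $|i-j| \ge 2$ would give $|d_G(v,w_i) - d_G(v,w_j)| \le 1$, contradicting $b_i = i$ and $b_j = j$; hence $G$ has no further edges and $G = P_n$. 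The converse is the direct computation that an endpoint of $P_n$ has transmission $1 + 2 + \cdots + (n-1) = \frac{n(n-1)}{2}$.

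The routine inequality is straightforward; the delicate point is the equality analysis, where one must convert the arithmetic saturation $b_i = i$ into the combinatorial rigidity that pins down the graph exactly, in particular ruling out every chord of the Hamiltonian path. I expect this rigidity step, together with the clean justification of $b_i \le i$ from connectivity alone, to be the main thing to get right.
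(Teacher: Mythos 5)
The paper does not prove this theorem; it is quoted from Zelinka \cite{Zel1968} and Aouchiche--Hansen \cite{AouHan2011}, so there is no internal proof to compare against. Your argument is correct and complete, and it is essentially the classical one: the BFS-layering bound $b_i \le i$ (valid because a shortest path to an eccentric vertex puts at least one vertex at each distance $0,1,\dots,j$) gives $\sigma(v) \le \binom{n}{2}$, and the equality analysis correctly converts the saturation $b_i = i$ into a Hamiltonian path whose chords are excluded by the fact that adjacent vertices have distances from $v$ differing by at most $1$.
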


The bound in Theorem \ref{Zel1968AouHan2011} was extended to digraphs by Ai et al. \cite{AiGerGutMaf2021}.

\begin{theorem}\label{AiGerGutMaf2021}
{\rm (Ai, Gerke, Gutin, Mafunda \cite{AiGerGutMaf2021})}\\
Let $D$ be a strong digraph of order $n\geq 3$. Then 
\[\rho(D)\leq \frac{n}{2},\]
with equality if and only if $D$ is strong and contains a Hamiltonian dipath 
$v_1v_2\dots v_n$ such that no directed edge of the form
$v_iv_j$ with $2\le i+1<j\le n$ is in $D$.
\end{theorem}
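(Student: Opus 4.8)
The plan is to prove the bound and the characterisation separately, using the equivalence that $\rho(D)=\max_v \overline{\sigma}(v,D)\le n/2$ holds if and only if every vertex $v$ satisfies the transmission bound $\sigma(v,D):=\sum_{w}d_D(v,w)\le \binom{n}{2}$. I would fix a vertex $v$ and exploit strong connectivity: since $v$ reaches every other vertex, the distance layers $L_i=\{w:d_D(v,w)=i\}$ partition $V(D)\setminus\{v\}$ for $1\le i\le \mathrm{ecc}(v)$, all distances being finite. The key elementary observation is that each layer is nonempty, because a shortest path witnessing $d_D(v,w)=i$ passes through a vertex at every smaller distance, so $|L_i|\ge 1$ for $1\le i\le \mathrm{ecc}(v)$. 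Hence the number of vertices within distance $i$ of $v$ is at least $i$, which means that if the $n-1$ vertices other than $v$ are listed in nondecreasing order of distance, the $j$-th one lies at distance at most $j$. Summing yields $\sigma(v,D)\le \sum_{j=1}^{n-1}j=\binom{n}{2}$, so $\overline{\sigma}(v,D)\le n/2$, and maximising over $v$ gives $\rho(D)\le n/2$.

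For the equality case I would first extract the structural consequences of $\rho(D)=n/2$. If a vertex $v_1$ attains $\overline{\sigma}(v_1,D)=n/2$, every inequality above must be tight, forcing the sorted list of distances from $v_1$ to be exactly $1,2,\dots,n-1$; equivalently each layer $L_i$ contains precisely one vertex for $1\le i\le n-1$ and $\mathrm{ecc}(v_1)=n-1$. Labelling $v_{i+1}$ as the unique vertex at distance $i$, a shortest $v_1$--$v_n$ path visits all $n$ vertices in order, so $v_1v_2\cdots v_n$ is a Hamiltonian dipath. A forward-skipping edge $v_iv_j$ with $j\ge i+2$ would give $d_D(v_1,v_j)\le d_D(v_1,v_i)+1=i<j-1$, contradicting $d_D(v_1,v_j)=j-1$; hence no such edge exists, which is exactly the stated condition.

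The converse — that a strong digraph containing such a Hamiltonian dipath with no forward-skipping edges achieves $\rho(D)=n/2$ — is where I expect the real work to lie. The dipath immediately gives $d_D(v_1,v_j)\le j-1$, so the task is the matching lower bound. Here I would argue by a monovariant on the index: without forward-skipping edges, every out-edge of $v_i$ goes either to $v_{i+1}$ (raising the index by exactly one) or to some $v_k$ with $k<i$ (lowering it), so any directed walk from $v_1$ to $v_j$ needs at least $j-1$ index-raising steps and therefore has length at least $j-1$. This gives $d_D(v_1,v_j)=j-1$ for all $j$, whence $\sigma(v_1,D)=\binom{n}{2}$ and $\overline{\sigma}(v_1,D)=n/2$, so $\rho(D)\ge n/2$; combined with the upper bound, equality holds. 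The main obstacle is making this converse airtight: verifying that the index monovariant correctly accounts for backward edges without undercounting walk length, and confirming that strong connectivity is invoked precisely where distances must be guaranteed finite.
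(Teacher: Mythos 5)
Your proposal is correct, but note that the paper itself contains no proof of this statement: Theorem~\ref{AiGerGutMaf2021} is quoted as a known result of Ai, Gerke, Gutin and Mafunda \cite{AiGerGutMaf2021}, so there is no in-paper argument to compare against; your write-up effectively supplies the missing proof. All three pieces of your argument check out. The bound: strong connectivity makes every layer $L_1,\dots,L_{\mathrm{ecc}(v)}$ nonempty, so the $j$-th smallest distance from $v$ is at most $j$, giving $\sigma(v,D)\le\binom{n}{2}$ and hence $\rho(D)\le n/2$. The forward equality direction: tightness forces the distance multiset from a remoteness-attaining vertex to be exactly $\{1,2,\dots,n-1\}$, so each layer is a singleton, the unique shortest $v_1$--$v_n$ path is a Hamiltonian dipath, and any arc $v_iv_j$ with $j\ge i+2$ would contradict $d_D(v_1,v_j)=j-1$. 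The converse: since every arc either advances the index by exactly one or decreases it, any $v_1$--$v_j$ walk has length at least $j-1$, so $d_D(v_1,v_j)=j-1$ exactly and $\overline{\sigma}(v_1,D)=n/2$; the monovariant does account correctly for backward arcs (they only hurt), and strong connectivity is used exactly where you say, namely to make $\rho(D)$ well-defined and to invoke the upper bound so that $\rho(D)=\overline{\sigma}(v_1,D)=n/2$ rather than merely $\rho(D)\ge n/2$. One stylistic nitpick: in the forward direction you write $d_D(v_1,v_j)\le d_D(v_1,v_i)+1=i$, which is right since $d_D(v_1,v_i)=i-1$ under your labelling, but the chain ``$=i<j-1$'' should be stated as $d_D(v_1,v_j)\le i<j-1=d_D(v_1,v_j)$ to make the contradiction explicit; this is cosmetic, not a gap.
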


In \cite{EntJacSny1976}, Entringer, Jackson, and Snyder strengthen the results of Theorem \ref{Zel1968AouHan2011} by incorporating the size of a graph.

\begin{theorem}\label{EntJacSny1976}
{\rm (Entringer, Jackson, Snyder~\cite{EntJacSny1976})}\\
Let $G$ be a connected graph of order $n$ and size at least $m$. Then
\[
\rho(G) \leq \frac{n+2}{2} - \frac{m}{n-1}.
\]
\end{theorem}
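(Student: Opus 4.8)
The plan is to fix a vertex $v$ realising the remoteness, that is, with $\overline{\sigma}(v,G) = \rho(G)$, and to prove the equivalent inequality $\sigma(v) := \sum_{w} d_G(v,w) \le \binom{n}{2} + (n-1) - m$, from which the claim follows after dividing by $n-1$, since $\binom{n}{2} + (n-1) = \frac{(n+2)(n-1)}{2}$. Because the right-hand side of the theorem decreases in $m$ while the hypothesis only guarantees size at least $m$, it suffices to prove this with $m = |E(G)|$; the stated bound for any smaller $m$ then follows a fortiori.

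First I would set up the breadth-first layering from $v$: writing $n_i$ for the number of vertices at distance exactly $i$ from $v$ and $e$ for the eccentricity of $v$, so that $n_0 = 1$, each $n_i \ge 1$ for $0 \le i \le e$, and $\sum_{i=0}^{e} n_i = n$. Counting by level sets gives $\sigma(v) = \sum_{j\ge 1}\bigl|\{w : d_G(v,w)\ge j\}\bigr| = \sum_{j=1}^{e}\bigl(n - \sum_{i<j} n_i\bigr)$, and a short computation rewrites this as $\sigma(v) = \binom{n}{2} - \Delta$, where $\Delta := \binom{n-e}{2} + \sum_{i=1}^{e-1}(n_i-1)(e-i) \ge 0$ measures how far the layer profile is from that of a path.

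Next I would bound the size. Since in a BFS layering every edge joins two vertices in the same layer or in consecutive layers, $m = |E(G)| \le M$, where $M := \sum_{i=0}^{e}\binom{n_i}{2} + \sum_{i=1}^{e} n_{i-1}n_i$ is the largest number of edges compatible with the layer sizes. The crux is then the purely arithmetic inequality $M - (n-1) \le \Delta$. The cleanest route is the substitution $n_i = 1 + p_i$ with $p_i \ge 0$ and $p_0 = 0$; expanding both sides and using $\binom{\sum_i p_i}{2} = \sum_i \binom{p_i}{2} + \sum_{i<k} p_i p_k$ collapses the difference $\Delta - (M - (n-1))$ into $\sum_{\,i<k,\ k\ge i+2} p_i p_k \;+\; \sum_{i=1}^{e-1} p_i\,(e-i-1)$, a sum of manifestly nonnegative terms. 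I expect this algebraic identity to be the main obstacle, both in carrying out the expansion without sign errors and in recognising the cancellation that separates the adjacent products $p_{i-1}p_i$ (forced by the between-layer edges) from the remaining products.

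Finally I would assemble the pieces: $\sigma(v) = \binom{n}{2} - \Delta \le \binom{n}{2} - \bigl(M - (n-1)\bigr) \le \binom{n}{2} + (n-1) - m$, which is the desired bound. It is worth noting for context that equality throughout forces all the nonnegative terms above to vanish and $m = M$; this is met by the path (where $\Delta = 0$ and $m = n-1$) and by $K_n$ (where $e=1$, $\Delta = \binom{n-1}{2}$, and $m = \binom{n}{2}$), consistent with both extremes saturating the bound.
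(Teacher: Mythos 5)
Your proposal is correct, but there is nothing in the paper to compare it against: Theorem~\ref{EntJacSny1976} is stated as a known result, cited from Entringer, Jackson and Snyder, and the paper gives no proof of it. Judged on its own, your argument is a complete and valid proof, and it follows what is essentially the classical route: reduce to bounding the transmission $\sigma(v)$ of a remoteness-attaining vertex, pass to the BFS layer profile $(n_0,n_1,\dots,n_e)$, and play the transmission identity off against the maximum number of edges compatible with that profile. I checked the two computations on which everything hinges. Writing $n_i=1+p_i$, $p_0=0$, $P=\sum_i p_i=n-e-1$, one gets $\binom{n}{2}-\sigma(v)=\binom{P+1}{2}+\sum_{i=1}^{e-1}(e-i)p_i$, which is exactly your $\Delta$ since $\binom{n-e}{2}=\binom{P+1}{2}$; and expanding $M=\sum_i\binom{n_i}{2}+\sum_i n_{i-1}n_i$ gives
\[
\Delta-\bigl(M-(n-1)\bigr)\;=\;\sum_{\substack{i<k\\ k\ge i+2}}p_ip_k\;+\;\sum_{i=1}^{e-1}(e-i-1)\,p_i\;\ge\;0,
\]
confirming your claimed cancellation (the adjacent products $p_{i-1}p_i$ contributed by the between-layer edges are precisely what is subtracted from $\binom{P}{2}$). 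Combined with $m\le M$ and the monotonicity remark that lets you take $m=|E(G)|$, this yields $\sigma(v)\le\binom{n}{2}+(n-1)-m=\frac{(n-1)(n+2)}{2}-m$, i.e.\ the theorem. Your equality check for the path and for $K_n$ is also consistent with the known extremal behaviour; characterising all equality cases would require forcing every term above to vanish together with $m=M$, but that is beyond what the statement asks.
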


Recently, Dankelmann et al. \cite{DanMafMal2025} demonstrated that the bound in Theorem~\ref{EntJacSny1976} can be significantly improved for $\kappa$-connected graphs, where $\kappa$ is arbitrary, and for $\lambda$-edge-connected graphs with $\lambda \in \{2,3\}$.

\begin{theorem}
{\rm (Dankelmann, Mafunda, Mallu~\cite{DanMafMal2025})}\\
Let $G$ be a $\kappa$-connected graph of order $n$ and size at least $m$ with $m \leq \binom{n-1}{2}$. Then 
\begin{equation*} 
\rho(G) \leq \frac{n}{2\kappa} +2 - \frac{1}{\kappa} - \frac{\kappa-1}{n-1} - \frac{m}{\kappa(n-1)}.
\end{equation*}
The bound is sharp. 
\end{theorem}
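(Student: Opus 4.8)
The plan is to reduce the problem to bounding the transmission of a single well-chosen vertex, and then to exploit the distance layering from that vertex, feeding in the two constraints separately: the size constraint through a complement edge-count, and the connectivity constraint through the lower bound $\kappa$ on the sizes of the internal distance layers. First I would fix a vertex $u$ with $\overline{\sigma}(u,G)=\rho(G)$, so that $\rho(G)=\sigma(u,G)/(n-1)$, and partition $V(G)$ into the distance layers $N_i=\{w:d(u,w)=i\}$ for $i=0,1,\dots,e$, where $e=\mathrm{ecc}(u)$. Writing $n_i=|N_i|$ and $N_{\ge k}=\sum_{j\ge k}n_j$, two elementary facts drive everything: the tail-sum identity $\sigma(u,G)=\sum_{i\ge 1} i\,n_i=\sum_{k\ge 1}N_{\ge k}$, and the observation that $G$ can have no edge joining two layers whose indices differ by at least two.

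The size enters through the complement. Since only edges inside a layer or between consecutive layers are admissible, the number of edges satisfies $m\le \binom{n}{2}-F$, where $F=\sum_{i=0}^{e-2}n_i\,N_{\ge i+2}$ counts the forbidden (non-adjacent layer) pairs. Connectivity enters next: for $1\le i\le e-1$ the layer $N_i$ is a vertex cut separating $u$ from a vertex at distance $e$, whence $n_i\ge\kappa$. Substituting $n_0=1$ and $n_i\ge\kappa$ (for $1\le i\le e-2$) into $F$, and then using $\sum_{k\ge 1}N_{\ge k}=\sigma(u,G)$ together with $N_{\ge 2}=n-1-n_1\le n-1-\kappa$, I obtain
\[
F\ \ge\ \kappa\,\sigma(u,G)-\kappa(n-1)-(\kappa-1)(n-1-\kappa).
\]
Combining this with $m\le\binom{n}{2}-F$ gives $m+\kappa\,\sigma(u,G)\le\binom{n}{2}+(2\kappa-1)(n-1)-\kappa(\kappa-1)$, and dividing by $\kappa(n-1)$ while recalling $\rho(G)=\sigma(u,G)/(n-1)$ reproduces the claimed bound exactly. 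As a consistency check, $\kappa=1$ collapses to $\rho(G)\le \tfrac{n+2}{2}-\tfrac{m}{n-1}$, recovering Theorem~\ref{EntJacSny1976}.

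The identities and the edge count are routine; the parts demanding care are the degenerate cases $e\le 1$, where the layering is trivial and the inequality must be verified directly, and, above all, the sharpness claim. For sharpness I would construct a ``path of cliques'': internal layers of size exactly $\kappa$, each inducing a clique and completely joined to its two neighbours, with suitably chosen end layers, then check that the resulting graph is $\kappa$-connected, has the prescribed order, can be tuned to have size exactly $m$, and attains equality in every step of the chain above. Pinning down this extremal family and matching the size $m$ precisely—this is presumably where the hypothesis $m\le\binom{n-1}{2}$ secures feasibility of the construction—is the main obstacle, since the inequality itself flows almost mechanically once the forbidden-edge count and the internal-layer lower bound are in place.
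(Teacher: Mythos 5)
Your derivation of the inequality is correct, and it takes a genuinely different route from the one the cited paper uses (and which Section~\ref{strong digraph} of the present paper mirrors for digraphs). The paper's method is structural and extremal: assume $G$ maximises remoteness and, subject to that, size; show by local modifications (completing layers, joining consecutive layers, shifting vertices between layers) that such a $G$ must be a $\kappa$-connected path-complete graph $K_1+[K_\kappa]^{\ell}+K_a+K_b$; prove a monotonicity lemma (adding any edge to a path-complete graph strictly decreases $\rho$); and finally evaluate $\rho(PK_{n,m,\kappa})$ to obtain the closed formula. You instead chain inequalities directly at the remote vertex: the forbidden-pair count $F=\sum_i n_i N_{\geq i+2}$, the identity $\sigma(u,G)=\sum_{k\geq 1}N_{\geq k}$, and the cut bounds $n_i\geq\kappa$ yield $m+\kappa\,\sigma(u,G)\leq\binom{n}{2}+(2\kappa-1)(n-1)-\kappa(\kappa-1)$, and division by $\kappa(n-1)$ gives exactly the stated bound; your algebra checks out, including the boundary case $e=2$ (where the generic estimate is still valid) and the trivial case $e\leq 1$, which you rightly flag for direct verification. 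What your approach buys: it is much shorter, needs no monotonicity lemma or structural claims, and in fact proves the displayed inequality without the hypothesis $m\leq\binom{n-1}{2}$, which is only relevant to sharpness. What the paper's approach buys: the stronger intermediate statement $\rho(G)\leq\rho(PK_{n,m,\kappa})$, which is tight for every feasible $m$ (the closed formula is tight only when $m$ satisfies the congruence and range conditions recorded in Theorem~\ref{theo:remoteness-of-kappa-connected-graphs}), together with a characterisation of the extremal graphs, which a pure inequality chain does not immediately deliver.

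The one incomplete piece is sharpness, which you defer: your ``path of cliques'' is precisely the family $PK_{n,m,\kappa}$, and to finish you must check that when $m\equiv\binom{n-1}{2}\pmod{\kappa}$ and $m$ lies in the range of Theorem~\ref{theo:remoteness-of-kappa-connected-graphs}, the graph $K_1+[K_\kappa]^{\ell}+K_a+K_b$ of order $n$ can be tuned to have size exactly $m$ and then attains equality in every step of your chain (all internal layers of size exactly $\kappa$, every admissible edge present). This is a routine verification rather than a conceptual obstacle --- indeed equality in your estimate for $F$ already forces the path-complete structure --- but as written the sharpness claim is asserted, not proved.
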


\begin{theorem}
{\rm (Dankelmann, Mafunda, Mallu~\cite{DanMafMal2025})}\\
(a) Let $G$ be a $2$-edge-connected graph of order $n$ and size $m$. 
Then
\[ \rho(G) \leq \left\{ \begin{array}{cc}
   \frac{n}{3} & \textrm{if $m <\lceil \frac{5}{3}n \rceil -2$,} \\
   \frac{n}{3} - \frac{2m}{3(n-1)} +\frac{5}{3} 
         & \textrm{if $m  \geq \lceil \frac{5}{3}n \rceil -2$},
      \end{array} \right. \]
and this bound is sharp apart from an additive constant. \\
(b) Let $G$ be a $3$-edge-connected graph of order $n$ and size $m$. 
Then
\[ \rho(G) \leq \left\{ \begin{array}{cc}
   \frac{n}{4} & \textrm{if $m < \lceil \frac{9}{4}n\rceil -2$,} \\
   \frac{n}{4} - \frac{m}{2(n-1)} + \frac{3}{2} 
        & \textrm{if $m \geq \lceil \frac{9}{4}n\rceil -2$,}
      \end{array} \right. \]
and this bound is sharp apart from an additive constant. 
\end{theorem}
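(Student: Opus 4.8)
The plan is to bound, for each vertex, its total distance (transmission) and then take the maximum. Fix a vertex $v$ with $\overline{\sigma}(v,G)=\rho(G)$ and write $\sigma=\sigma(v)=\sum_{w}d(v,w)$, so that $\rho(G)=\sigma/(n-1)$. Partition $V(G)$ into the BFS layers $L_i=\{w:d(v,w)=i\}$, $0\le i\le e$, where $e$ is the eccentricity of $v$, and put $n_i=|L_i|$ and $s_i=\sum_{j\ge i}n_j$ (the number of vertices at distance at least $i$), so that $\sigma=\sum_{i\ge 1}s_i$. Since no edge of $G$ joins two layers whose indices differ by more than one, every edge lies inside a layer or between consecutive layers, which is the structural fact everything rests on.

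The first ingredient is a purely structural lemma coming from edge-connectivity: for $0\le i\le e-1$ one has $n_i+n_{i+1}\ge \lambda+1$. Indeed, the edge cut separating $L_0\cup\dots\cup L_i$ from the rest consists exactly of the edges between $L_i$ and $L_{i+1}$, so there are at least $\lambda$ of them; as $G$ is simple their number is at most $n_in_{i+1}$, whence $n_in_{i+1}\ge \lambda$, and minimising $n_i+n_{i+1}$ subject to this gives $\lambda+1$ precisely when $\lambda\in\{2,3\}$. For $\lambda\ge 4$ the configuration $n_i=n_{i+1}=2$ already carries four edges, so $n_i+n_{i+1}\ge\lambda+1$ breaks down, and this is exactly why the statement is restricted to $\lambda\in\{2,3\}$. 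Feeding $s_i-s_{i+2}=n_i+n_{i+1}\ge \lambda+1$ into $\sigma=\sum s_i$ already yields the size-free estimate $\rho(G)\le \tfrac{n}{\lambda+1}+O(1)$, which is the first branch of each bound.

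The second, harder, ingredient turns ``size $m$'' into a saving. The starting point is that vertices lying in non-consecutive layers are automatically non-adjacent, so $\sum_{j\ge i+2}n_in_j\le \binom{n}{2}-m$; equivalently, via the handshake bound $\deg(w)\le n_{i-1}+n_i+n_{i+1}-1$ for $w\in L_i$, one gets $2m\le \sum_i n_i(n_{i-1}+n_i+n_{i+1})-n$. The target becomes the master inequality $(\lambda+1)\sigma+2m\le n^2+O(n)$, which on dividing by $n-1$ is exactly the second branch of (a)/(b) up to the additive constants $\tfrac53,\tfrac32$. Combining the structural lemma with the edge count leads to a constrained optimisation: maximise $\sum_i i\,n_i$ subject to $\sum_i n_i=n$, to $n_i+n_{i+1}\ge\lambda+1$, and to the edge constraint above. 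Its optimum is attained by a ``necklace of $(\lambda+1)$-cycles with saturated chords'' (alternating thin layers carrying all admissible within-layer edges), which is also the extremal family establishing sharpness up to the additive constant; the threshold $\lceil\tfrac53 n\rceil-2$ (resp. $\lceil\tfrac94 n\rceil-2$) is precisely where the free within-layer chords of this necklace are exhausted, so that below it the size constraint is inactive and the bound is $\tfrac{n}{\lambda+1}$.

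The main obstacle is obtaining the correct coefficient of $m$. The naive route, multiplying $n_i+n_{i+1}\ge\lambda+1$ by $s_{i+2}$ and summing, produces $(\lambda+1)\sigma\le n^2-2m+\sum_i n_in_{i+1}+O(n)$, in which the uncontrolled consecutive-layer term $\sum_i n_in_{i+1}$ only yields the weaker coefficient $1$ in place of the required $2$. The crux is therefore to show that this term is negligible for near-extremal profiles: one argues that a ``thick'' layer (of size bounded away from the minimum) forces the profile far from optimal, either by a smoothing argument that replaces a thick layer by several thin ones without decreasing $\sigma$ while respecting the edge budget, or by a direct case split that separates thin from thick layers and charges the surplus edges. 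Carrying this out, together with the boundary bookkeeping at $L_0$ and $L_e$ and the integrality needed to pin down the exact thresholds, is where the genuine work, and the additive-constant slack, reside.
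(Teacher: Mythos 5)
First, a point of reference: the present paper does not prove this theorem at all --- it is quoted from \cite{DanMafMal2025}, where the proof is structural: one takes a $\lambda$-edge-connected graph of maximum remoteness and, among those, maximum size, shows by edge-addition and vertex-shifting arguments that it must be a $\lambda$-edge-connected path-complete graph $[K_1+K_\lambda]^k+\dots+K_a+K_b$ (this is the reduction $\rho(G)\le\rho(PK^{\lambda}_{n,m})$ quoted in Section 4, mirrored in Section 3 of this paper for vertex-connectivity of digraphs), and then evaluates the remoteness of that family. Your proposal takes a genuinely different, direct layer-counting route, and the ingredients you actually prove are correct: the cut bound $n_in_{i+1}\ge\lambda$, hence $n_i+n_{i+1}\ge\lambda+1$ precisely for $\lambda\in\{2,3\}$; the degree bound $2m\le\sum_i n_i(n_{i-1}+n_i+n_{i+1})-n$; and the identification of the extremal family and of the thresholds. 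But the proposal is not a proof: the step you yourself label the crux --- upgrading the coefficient of $m$ from $1$ (which the naive summation gives) to $2$, i.e. the master inequality $(\lambda+1)\sigma+2m\le n^2+O(n)$ --- is left at the level of ``one argues \dots either by a smoothing argument \dots or by a direct case split.''

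Moreover, the specific strategy you sketch for that step cannot work as stated. You propose to show $\sum_i n_in_{i+1}$ is ``negligible for near-extremal profiles'' because a thick layer forces the profile far from optimal. This is false once $m$ is large: the extremal graphs themselves are $[K_1+K_\lambda]^k+K_1+K_a+K_b$ with $a$ and $b$ both of order $\Theta(n)$ for suitable $m$, so the extremal profiles contain two adjacent thick layers and $\sum_i n_in_{i+1}\ge ab=\Theta(n^2)$. The master inequality still holds for them, but only because the large product $ab$ is offset by a correspondingly small $\sigma$; a correct argument must capture this trade-off rather than dismiss the term, and doing so essentially forces the structural/exchange analysis of the cited paper. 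A second, smaller gap: the first branch $\rho(G)\le n/(\lambda+1)$ is claimed exactly, but your telescoping $s_{i+2}\le s_i-(\lambda+1)$ only gives $n/(\lambda+1)+O(1)$. For instance, with $\lambda=2$ and $n=3t+2$ the telescoped bounds sum to $\sigma\le 3t^2+3t+1>n(n-1)/3$, and to remove the excess one must observe that the profile attaining all these bounds simultaneously ends in two consecutive layers of size $1$, contradicting $2$-edge-connectivity. You acknowledge this bookkeeping generically but never carry it out, and the constants $\tfrac53$, $\tfrac32$ and the thresholds $\lceil \tfrac53 n\rceil-2$, $\lceil \tfrac94 n\rceil-2$ are nowhere derived. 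As it stands the proposal is a plausible program whose decisive steps are missing, and one of them is pointed in a direction that a correct proof cannot follow.
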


The literature contains several results on the remoteness of graphs, ranging from bounds 
on the remoteness of various classes of graphs to the relationships between remoteness and other 
graph parameters. There are results on remoteness in outerplanar graphs~\cite{DanMafMal20251}, 
in triangulations and quadrangulations~\cite{CzaDanOlsSze2021, DanMafMal20252}, in graphs that forbid certain cycles~\cite{DanJonMaf2021}, and in trees~\cite{BarEntSze1997, Zel1968}. 
Relations between remoteness and other graph parameters have also been explored for example, 
girth~\cite{AouHan2017}, minimum degree~\cite{Dan2015}, maximum degree~\cite{DanMafMal2022}, 
and clique number~\cite{HuaDas2014}. Differences between remoteness and other graph parameters 
have also been studied; see, for example,~\cite{AouHan2011, Dan2016, DanMaf2022, DanMafMal20252}. A survey on proximity and remoteness in graphs is provided in~\cite{Aou2024}. \\

This paper is organised as follows. In Section 2, we introduce the terminology and notation used throughout. In Section 3, we establish an upper bound on the remoteness of a strong digraph with given order, size, and vertex-connectivity, and characterise the extremal digraphs that attain this bound. In Section 4, we show that the upper bounds obtained on remoteness for graphs with given order, size, and connectivity constraints in \cite{DanMafMal2025} can be generalised to a larger class of digraphs that includes all graphs, in particular the Eulerian digraphs.

\section{Terminology and notation}
We use the following notation. For a strong digraph $D$ we denote by $V(D)$ and $E(D)$ the {\em vertex set} and {\em edge set} (often also called arc set), respectively ($D$ is strong if for every pair $u,v$ of vertices, $D$ contains both a $u-v$ path and a $v-u$ path). The {\em order} and {\em size} of $D$ are denoted by $n(D)$ and $m(D)$, respectively. For vertices $u$ and $v$ in $D$, an arc $(u,v)$ is sometimes denoted by writing $\overrightarrow{uv}$. By an $(n,m)$-digraph we mean a digraph of order $n$ and size at least $m$. An Eulerian circuit in a strong digraph $D$ is a closed directed trail. Furthermore, we say that a strong digraph $D$ is an {\em Eulerian digraph} if it contains an Eulerian circuit.\\

The {\em vertex-connectivity} $\kappa(D)$ of a digraph $D$ is the minimum number of vertices whose removal results in a digraph that is not strongly connected. The {\em edge-connectivity} $\lambda(D)$ of a digraph $D$ is the minimum number of arcs that must be removed to make the digraph not strongly connected.\\

%If $v$ is a vertex in a digraph $D$, the {\em out-neighbourhood} of $v$ is defined as $N_{D}^+(v) = \{u \in V(D) \mid \overrightarrow{vu} \in E(D)\}$, and the {\em in-neighbourhood} of $v$ is given by $N_{D}^-(v) = \{u \in V(D) \mid \overrightarrow{uv} \in E(D)\}$. The {\em out-degree}, denoted by $d^+_D(v)$, and the {\em in-degree}, denoted by $d^-_D(v)$, correspond to the cardinalities $|N_{D}^+(v)|$ and $|N_{D}^-(v)|$, respectively. The {\em degree of a vertex} $v$ is simply ${\rm deg}(v)=d^+_D(v)+d^-_D(v)$. The {\em  minimum out-degree} and {\em  minimum in-degree} of $D$ are denoted by $\delta^+(D)$ and $\delta^-(D)$, respectively. Similarly, the {\em maximum out-degree} and {\em maximum in-degree} of $D$ are denoted by $\Delta^+(D)$ and $\Delta^-(D)$, respectively.

For a digraph \( D \), the distance \( d_D(v, w) \) is defined as the minimum number of arcs on a path from \(v\) to \(w\). The {\em eccentricity} ${\rm ecc}_D(v)$ of a vertex $v$ in a digraph $D$ is the distance from $v$ to a vertex farthest from $v$. The largest of all eccentricities of vertices of $D$ is called the {\em diameter} and is denoted by 
${\rm diam}(D)$. For $v \in V(D)$, let $ N_{i}(v)=\{x \in V(D)|d_{D}(v,x)=i \} $ and $ |N_{i}(v)|=n_i(v) $ for $ i \in \mathbb{Z}^{+} $.  Clearly, $n_i(v) >0$ if and only if $0\leq i \leq {\rm ecc}_D(v)$. 
The {\em distance degree} of $v$ is the sequence $(n_{0}(v),n_{1}(v),n_{2}(v),\ldots,n_{d}(v))$ where $d \in \mathbb{Z}^{+}$ and is denoted by $X_{D}(v)$. Let $ N_{\leq i}(v)=\{x \in V(D)|d_{D}(v,x)\leq i \} $ and  $ N_{\geq i}(v)=\{x \in V(D)|d_{D}(v,x)\geq i \} $.\\

We denote the complete graph of order $n$ by $K_n$. If $ D_1,D_2,\ldots, D_k $ are disjoint digraphs, then the {\em sequential sum} $ D_1 + D_2 + \ldots + D_k $ is the digraph obtained from their union by adding an arc from every vertex in $ D_i $ to every vertex in $ D_{i+1} $ and from every vertex in $ D_{i+1} $ to every vertex in $ D_{i} $ for $ i=1, 2, \ldots, k-1 $. The sequential sum of undirected graphs is defined analogously. If $G$ is an undirected graph then by $ \overleftrightarrow{G} $ we mean the digraph that is obtained from $G$ by replacing each undirected edge by two arcs in the opposite direction. Hence, by $\overleftrightarrow{K_n}$ we mean a {\em  complete digraph} of order $n$.\\

If $t, k\in \mathbb{N}$, then $[K_{a_1} + K_{a_2}+\ldots+K_{a_t}]^k$ 
stands for $k$ repetitions of the pattern 
$K_{a_1} + K_{a_2}+\ldots+K_{a_t}$. If $D$ is a digraph, then the {\em complement} of $D$, denoted by $\overline{D}$, is the graph on the same vertex set as $D$, in which two vertices are adjacent if
they are not adjacent in $D$. \\

If $D$ and $D'$ are distinct digraphs with the same vertex set, but $A(D) \subset A(D')$, i.e., $D'$ is obtained from $D$ by adding arcs, we will denote this relationship by $D \lneqq D'$. If $(k,b)$ and $(k',b')$ are distinct pairs of integers, then we write $(k,b) \prec (k',b')$ if $(k,b)$ comes before $(k',b')$ in the lexicographic ordering of pairs of integers, i.e., if
either $k<k'$ or $k=k'$ and $b< b'$. With the necessary adjustments, all the above definitions also hold for graphs.

\section{Maximum remoteness of a strong digraph with given order and size}\label{strong digraph}
In this section, we determine the maximum remoteness of $\kappa$-connected strong digraphs for a given order, size, and vertex connectivity. While the proof technique is partially inspired by the work of Dankelmann et al.~\cite{DanMafMal2025}, the digraph setting presents significantly greater challenges. The arguments developed here demand a more detailed analysis, as well as the construction of more sophisticated extremal example. This section begins with the introduction of the necessary definitions.

\begin{definition}
Let \( D_1, D_2, \ldots, D_k \) be digraphs. We define the digraph $$D_1 \overleftarrow{+} D_2 \overleftarrow{+} D_3 \overleftarrow{+} \cdots \overleftarrow{+} D_k$$ as the digraph obtained from the disjoint union of \( D_1, D_2, \ldots, D_k \) by adding directed arcs in both directions between each vertex in $D_i$ and each vertex in $D_{i+1}$ for $i=1,2,\ldots,k-1$, and directed arcs from each vertex in $D_i$ to each vertex in $D_j$ for each \( i = 3, 4, \ldots, k \) and for all $j \in \{1,2,\ldots,i-2\}$.
\end{definition}

\begin{definition}\label{def:kappa-connected-path-complete digraph}
Let $\ell,a,b\in \mathbb{N}$ with $a \geq \kappa$. A \textbf{$\kappa$-connected path-complete digraph} is a digraph of the form \[ K_1 \; \overleftarrow{+} \; [ \overleftrightarrow{K_\kappa}]^{\ell} \; \overleftarrow{+} \; \overleftrightarrow{K_a} \; \overleftarrow{+} \; \overleftrightarrow{K_b} .\]
Where $[\overleftrightarrow{K_\kappa}]^\ell $ stands for $\ell$ repetition of $\overleftrightarrow{K_\kappa}$ such that $\left[\overleftrightarrow{K_\kappa} \overleftarrow{+} \overleftrightarrow{K_\kappa} \overleftarrow{+} \overleftrightarrow{K_\kappa} \overleftarrow{+} \cdots \overleftarrow{+} \overleftrightarrow{K_\kappa} \right] $.
\end{definition}
\noindent See for an example $\kappa$-connected path-complete digraph with $\ell=2$ in Figure \ref{fig:The vertex connected path-complete strong digraph}.
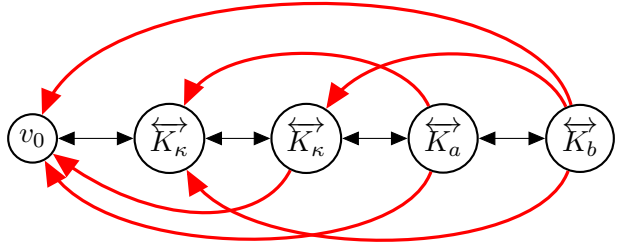
\begin{figure}[H]
	\begin{center}
		\begin{tikzpicture}
		[scale=0.3,inner sep=0.8mm, % this is the node radius
		vertex/.style={circle,thick,draw}, % this defines the default style for the vertex class
		thickedge/.style={line width=10pt}] % this defines the default style for the thick edges
		\begin{scope}[>=triangle 45]
		
		\node[vertex] (a1) at (-8,0) [fill=white] {$v_{0}$};
		\node[vertex] (a2) at (-2,0) [fill=white] {$\overleftrightarrow{K_{\kappa}}$};
		\node[vertex] (a3) at (4,0) [fill=white] {$\overleftrightarrow{K_{\kappa}}$}; 
		\node[vertex] (a4) at (10,0) [fill=white] {$\overleftrightarrow{K_{a}}$};
	\node[vertex] (a5) at (16,0) [fill=white]  {$\overleftrightarrow{K_{b}}$};

       \draw[<->, black](a1)--(a2);
       \draw[<->, black](a2)--(a3);
       \draw[<->, black](a3)--(a4);
       \draw[<->, black](a4)--(a5);
       
       \draw[->, very thick, red] (a5)..controls (14,4.5) and (7,4.5)..(a3);
       \draw[->, very thick, red] (a5)..controls (14,-5.5) and (1,-5.5)..(a2);
       \draw[->, very thick, red] (a5)..controls (14,7.5) and (-5,7.5)..(a1);
       
         \draw[->, very thick, red] (a4)..controls (8,4.5) and (0,4.5)..(a2);
       \draw[->, very thick, red] (a4)..controls (8,-5.5) and (-5,-5.5)..(a1);
       
       \draw[->, very thick, red] (a3)..controls (2,-4) and (-2.75,-4)..(a1);
       
      % \draw[<-, black](a5)--(a3);
       
       \end{scope}
		\end{tikzpicture}
		\caption{The $\kappa$-connected path-complete digraph with $\ell=2$.}\label{fig:The vertex connected path-complete strong digraph}
	\end{center} 
	\end{figure}

Clearly, this $\kappa$-connected path-complete digraph is strong and has diameter  at least 3. Furthermore, we refer to a $1$-connected path-complete digraph simply as a \textbf{\textit{path-complete digraph}}.

\begin{lemma}\label{la:size-vs-remoteness-in-kappa-connected-pc-digraph}
	(a) Let $H$ be a $\kappa$-connected path-complete digraph, where $\kappa \in \mathbb{N}$. Then 
$\rho(H+\overrightarrow{uv}) < \rho(H)$ for any arc $ \overrightarrow{uv} \in E(\overline{H})$. \\
(b) Let $H_1, H_2$ be two distinct $\kappa$-connected path-complete strong digraphs of order $n$. Then either $m(H_1) < m(H_2)$ and $\rho(H_1) > \rho(H_2)$, or $m(H_1) > m(H_2)$ 
and $\rho(H_1) < \rho(H_2)$. \\
 (c) Given $n$, $\kappa$. Then there exists a $\kappa$-connected path-complete strong digraph of order $n$ and size $m$ if an only if $m \equiv (n^2-2n-1) \pmod{\kappa} $ and $ \frac{n}{2}(3\kappa+n)-n-\kappa^2-b(\kappa-b) \leq m \leq n^2-2n-1$, where $b \in \{1,2,\ldots, \kappa\}$ with
$b \equiv n-1 \pmod{\kappa}$.
\end{lemma}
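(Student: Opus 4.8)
The plan is to treat the three parts in order, first extracting the distance structure of a $\kappa$-connected path-complete digraph $H$ and reusing it throughout. Write the blocks of $H$ as $B_1,\ldots,B_k$ (so $B_1=K_1$, then $\ell$ copies $B_2,\ldots,B_{\ell+1}=\overleftrightarrow{K_\kappa}$, then $B_{\ell+2}=\overleftrightarrow{K_a}$, $B_{\ell+3}=\overleftrightarrow{K_b}$, with $k=\ell+3$), of sizes $s_1=1$, $s_2=\cdots=s_{\ell+1}=\kappa$, $s_{\ell+2}=a$, $s_{\ell+3}=b$. From the definition of $\overleftarrow{+}$ one reads off all distances: for the unique vertex $v_0\in B_1$ one has $d_H(v_0,w)=i-1$ whenever $w\in B_i$, while for $u\in B_i$ with $i\ge 2$ every vertex of $B_1\cup\cdots\cup B_{i-1}$ and every other vertex of $B_i$ is at distance $1$ from $u$, and a vertex of $B_{i+t}$ is at distance $t$. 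A second easy observation is that the only arcs absent from $H$ are the \emph{forward–skip} arcs, i.e. arcs $\overrightarrow{uv}$ with $u\in B_i$, $v\in B_j$ and $j\ge i+2$; this is used repeatedly.

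For part (a) I first show $v_0$ is the unique vertex of maximum transmission $\sigma(v,H)=\sum_{w}d_H(v,w)$, hence the unique vertex attaining $\rho(H)$. Writing $P_i=s_1+\cdots+s_i$, the distances above give (all vertices of a block share a transmission) the telescoping relation $\sigma(B_i,H)-\sigma(B_{i+1},H)=n-P_{i+1}$, which is strictly positive for $i+1<k$ and zero for $i+1=k$; thus $\sigma(B_1)>\sigma(B_2)>\cdots>\sigma(B_{k-1})=\sigma(B_k)$, so $v_0$ is the strict maximiser. Now take any $\overrightarrow{uv}\in E(\overline H)$; by the observation above it is a forward–skip arc, say $u\in B_i$, $v\in B_j$ with $j\ge i+2$. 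Since adding an arc never increases a distance, $\overline\sigma(w,H+\overrightarrow{uv})\le\overline\sigma(w,H)$ for all $w$, and for every $w\ne v_0$ this is already $<\rho(H)$ by uniqueness; for $w=v_0$ the new arc yields $d(v_0,v)\le d_H(v_0,u)+1=i<j-1=d_H(v_0,v)$, so $\overline\sigma(v_0,H+\overrightarrow{uv})<\overline\sigma(v_0,H)=\rho(H)$. Taking the maximum over $w$ gives $\rho(H+\overrightarrow{uv})<\rho(H)$.

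The heart of parts (b) and (c) is a single identity. From the distances, $\sigma(v_0,H)=\sum_i s_i(i-1)$, and writing $M=|E(\overline H)|=\sum_{j\ge i+2}s_is_j$ for the number of missing arcs, I will prove by a direct expansion that
\[
\kappa\,\sigma(v_0,H)-M=(2\kappa-1)(n-1)-\kappa(\kappa-1),
\]
a quantity depending only on $n,\kappa$; the cancellation of all $(\ell,a,b)$–dependence uses $s_1=1$ and the constraint $\ell\kappa+a+b=n-1$. Since $\rho(H)=\sigma(v_0,H)/(n-1)$ and $m(H)=n(n-1)-M$, this rewrites as $\rho(H)=\mathrm{const}(n,\kappa)-\dfrac{m(H)}{\kappa(n-1)}$, a strictly decreasing affine function of $m(H)$, which instantly yields the ordering in part (b) once distinct path-complete digraphs of order $n$ are shown to have distinct sizes. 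For the latter I parametrise the family by $(\ell,a)$ (with $b=n-1-\ell\kappa-a$ then determined), substitute to get $\sigma(v_0,H)=g(\ell)-a$ for an explicit quadratic $g$, and verify that for consecutive $\ell$ the admissible ranges of $\sigma(v_0,H)$ are disjoint and abut with a gap of exactly $1$; hence $\sigma(v_0,H)$, and so $m$, is injective on the family, completing part (b).

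For part (c) the same computation does double duty: within a fixed $\ell$ decreasing $a$ by $1$ raises $\sigma(v_0,H)$ by $1$, and passing from the top of the $\ell$-range to the bottom of the $(\ell+1)$-range also raises it by exactly $1$, so $\sigma(v_0,H)$ takes every integer value from its minimum (the most compact digraph, $\ell=1$, $b=1$, $a$ maximal) to its maximum (the longest digraph, with $a=\kappa$ and last block $b\equiv n-1\pmod\kappa$, $b\in\{1,\ldots,\kappa\}$). Translating through $m=n(n-1)-\kappa\,\sigma(v_0,H)+\mathrm{const}$, the attainable sizes are exactly the arithmetic progression of common difference $\kappa$ (equivalently the residue class of $m$ modulo $\kappa$ in the statement) running from the size of the longest digraph (the lower endpoint) to that of the most compact one (the upper endpoint), which a routine evaluation matches with the stated bounds. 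I expect the main obstacle to be the bookkeeping behind the identity $\kappa\,\sigma(v_0,H)-M=\mathrm{const}$ and, for part (c), confirming that the $\sigma$-ranges over successive $\ell$ mesh with no gaps; once these are established, all three statements follow.
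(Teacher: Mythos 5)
Your proposal is correct, and for parts (b) and (c) it takes a genuinely different route from the paper. The paper proves (b) structurally: it shows that for any two distinct $\kappa$-connected path-complete digraphs $H_1,H_2$ of order $n$, one is obtained from the other by successively adding arcs (passing through intermediate path-complete digraphs, guided by the lexicographic order on the pairs $(\ell,b)$), so that $H_1 \lneqq H_2$ or $H_2 \lneqq H_1$, and then combines part (a) with the fact that arc addition never increases distances; part (c) is then read off from the same arc-adding procedure (each step adds a multiple of $\kappa$ arcs) together with the extreme pairs $(1,1)$ and $(\ell_0,b_0)$. You instead prove the closed-form identity $\kappa\,\sigma(v_0,H)-|E(\overline{H})|=(2\kappa-1)(n-1)-\kappa(\kappa-1)$, which I verified and is correct, so that $\rho(H)$ is an explicit strictly decreasing affine function of $m(H)$; part (b) then reduces to injectivity of $m$ on the family, and your range computation is also correct: with $\sigma(v_0,H)=g(\ell)-a$ one gets $g(\ell+1)-g(\ell)=(n-1)-\kappa(\ell+2)$, whence the minimum of the $\sigma$-range at level $\ell+1$ exceeds the maximum at level $\ell$ by exactly $1$. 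Your route buys several things the paper's does not: (b) becomes independent of (a); the abutting ranges show the attainable $\sigma$-values, hence sizes, form a complete arithmetic progression with difference $\kappa$, which is precisely (c); and the identity yields at once $\rho(H)=\frac{n}{\kappa}+2-\frac{1}{\kappa}-\frac{\kappa-1}{n-1}-\frac{m(H)}{\kappa(n-1)}$, the evaluation the paper only obtains later (Corollary 3.4) by a separate calculation. For part (a) your argument is essentially the paper's, but you supply the two steps the paper dismisses as ``clearly'': the telescoping relation $\sigma(B_i,H)-\sigma(B_{i+1},H)=n-P_{i+1}$ establishing $v_0$ as the \emph{unique} maximiser, and the observation that for $w\neq v_0$ one still has $\overline{\sigma}(w,H+\overrightarrow{uv})\leq\overline{\sigma}(w,H)<\rho(H)$, which is genuinely needed to pass from a decrease in $\sigma(v_0,\cdot)$ to a decrease in $\rho$.

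One caveat concerns your closing claim that the routine evaluation ``matches the stated bounds.'' If you carry it out, the maximum size of a $\kappa$-connected path-complete digraph of order $n$, attained at $(\ell,b)=(1,1)$, i.e.\ $K_1 \; \overleftarrow{+} \; \overleftrightarrow{K_\kappa} \; \overleftarrow{+} \; \overleftrightarrow{K_{n-\kappa-2}} \; \overleftarrow{+} \; K_1$, comes out as $n(n-1)-(n-1)=(n-1)^2=n^2-2n+1$, since exactly $n-1$ forward-skip arcs are missing ($(n-1-\kappa)$ from $K_1$ plus $\kappa$ from the $\overleftrightarrow{K_\kappa}$ block); this does not equal the value $n^2-2n-1$ asserted in the lemma and in the paper's own proof of (c), and correspondingly the congruence class should be that of $(n-1)^2$ modulo $\kappa$, which differs from the stated one whenever $\kappa\geq 3$. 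This is an arithmetic slip in the paper rather than a flaw in your method — your identity produces the corrected constants automatically — but as written your assertion of a match would fail at the upper endpoint, so you should state and prove the corrected endpoint rather than the lemma's.
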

\begin{proof}
(a) Let $a,b,\kappa \in \mathbb{N}$ with $H = K_1 \; \overleftarrow{+} \; [ \overleftrightarrow{K_\kappa}]^{\ell} \; \overleftarrow{+} \; \overleftrightarrow{K_a} \; \overleftarrow{+} \; \overleftrightarrow{K_b}$ and let $v_0$ be a vertex in the leftmost $K_1$. Then, clearly $v_0$ uniquely attains the remoteness of $H$. Now, adding any new arc to $H$ reduces the distance from $v_0$ to some vertex, and so it reduces $\sigma(v_0, H)$ and thus the remoteness, thereby establishing part (a) of the lemma.\\

(b) Let $H_1$ and $H_2$ be two distinct $\kappa$-connected path-complete strong digraphs of order $n$. 
It suffices to show that 
\begin{equation} \label{eq:either-H<H' or H'<H kappa digraph}
H_2 \lneqq H_1 \quad \textrm{or} \quad H_1 \lneqq H_2, 
\end{equation}
To see this observe that if $H_2 \lneqq H_1$, then $m(H_2) < m(H_1)$, and by part (a), we have $\rho(H_2) > \rho(H_1)$. Similarly, if $H_1 \lneqq H_2$, then 
$m(H_1) < m(H_2)$, and $\rho(H_1) > \rho(H_2)$. In both cases part (b) of the lemma holds.  \\

To prove \eqref{eq:either-H<H' or H'<H kappa digraph}, let $\ell, \ell', a, a', b, b' \in \mathbb{N}$ with $a, a' \geq \kappa$ such that 
\[ H_1 =K_1 \; \overleftarrow{+} \; [ \overleftrightarrow{K_\kappa}]^{\ell} \; \overleftarrow{+} \; \overleftrightarrow{K_a} \; \overleftarrow{+} \; \overleftrightarrow{K_b}, \quad 
  H_2 = K_1 \; \overleftarrow{+} \; [ \overleftrightarrow{K_\kappa}]^{\ell'} \; \overleftarrow{+} \; \overleftrightarrow{K_{a'}} \; \overleftarrow{+} \; \overleftrightarrow{K'_{b'}}, \]  
with $\ell$ and $\ell'$ repetitions of $\overleftrightarrow{k_{\kappa}}$ for $H$ and $H'$, respectively. \\

Since $H_1 \neq H_2$, it follows that $(\ell,b)\neq (\ell',b')$. We have either  
$(\ell,b) \prec (\ell',b')$ or $(\ell',b') \prec (\ell,b)$. 
Without loss of generality we may assume the former. 
First assume that $\ell=\ell'$ and $b <b'$. Then $H_1$ is obtained from $H_2$ by 
adding arcs from the rightmost complete digraph $\overleftrightarrow{K_{\kappa}}$ to $b'-b$ vertices of the complete digraph $\overleftrightarrow{K_{b'}}$, and so we have that $H_2 \lneqq H_1$. Now assume that \(\ell < \ell'\).
We obtain \(H_1\) from \(H_2\) by adding arcs as follows: first, add arcs from all vertices of the first rightmost complete digraph $\overleftrightarrow{K_{\kappa}}$ to all vertices of the complete digraph \(\overleftrightarrow{K_{b'}}\) in \(H_2\), resulting in the $\kappa$-connected strong digraph \(K_1 \; \overleftarrow{+} \; [ \overleftrightarrow{K_\kappa}]^{\ell'} \; \overleftarrow{+} \; \overleftrightarrow{K_{a'+b'}}\). From this point onwards, we successively add arcs  from all vertices of the second rightmost complete digraph $\overleftrightarrow{K_{\kappa}}$ to all vertices of the last rightmost complete digraph \(\overleftrightarrow{K}_{q_{i}}\), where $q_i=a'+b'+i\kappa$ for $i \in \{1,2,\ldots,\ell'-\ell-1\}$ and $i$ denote the $i^{th}$ iteration. This yields the $\kappa$-connected path-complete digraphs \(K_1 \; \overleftarrow{+} \; [ \overleftrightarrow{K_\kappa}]^{\ell'-1} \; \overleftarrow{+} \; \overleftrightarrow{K_{q_1}}\), \(K_1 \; \overleftarrow{+} \; [ \overleftrightarrow{K_\kappa}]^{\ell'-2} \; \overleftarrow{+} \; \overleftrightarrow{K_{q_2}}\), and so on until after repeating this process $\ell'-\ell-1$ we obtain the $\kappa$-connected path-complete digraph
\(K_1 \; \overleftarrow{+} \; [ \overleftrightarrow{K_\kappa}]^{\ell + 1 } \; \overleftarrow{+} \; \overleftrightarrow{K_{q_{\ell'-\ell-1}}}\). Note that $a'+b'+(\ell'-\ell-1)\kappa = a+b-\kappa$.\\

Finally, by adding arcs from the second rightmost complete digraph $\overleftrightarrow{K_{\kappa}}$ to a subset of \(a - \kappa\) vertices of the complete digraph \(\overleftrightarrow{K_{a + b - \kappa}}\) in \(K_1 \; \overleftarrow{+} \; [ \overleftrightarrow{K_\kappa}]^{\ell + 1 }  \; \) \( \overleftarrow{+} \; \overleftrightarrow{K_{a+b-\kappa}}\), we obtain the digraph \(K_1 \; \overleftarrow{+} \; [ \overleftrightarrow{K_\kappa}]^{\ell} \; \overleftarrow{+} \; \overleftrightarrow{K_{a+b}}\), which is \(H_1\). Hence, \(H_2 \lneqq H_1\), and \eqref{eq:either-H<H' or H'<H kappa digraph} follows, confirming that part (b) holds.\\

(c) Fix \( n \) and $\kappa$. If for $\ell, a,b \in \mathbb{N}$ the strong digraph  
\( K_1 \; \overleftarrow{+} \; [ \overleftrightarrow{K_\kappa}]^{\ell} \; \overleftarrow{+} \; \overleftrightarrow{K_a} \; \overleftarrow{+} \; \) \(\overleftrightarrow{K_b} \) has order $n$ and is $\kappa$-connected, then $n=1 + \ell\kappa + a+b$, and 
$a \geq \kappa$. This implies that $\ell = \frac{n-1-a-b}{\kappa} \leq \frac{n-2-\kappa}{\kappa}$, and $b = n-1-\ell\kappa - a \leq n-1 - (\ell+1)\kappa$. With respect to the order $\prec$, the smallest and largest pairs $(\ell,b)$ satisfying 
these conditions are $(1,1)$ and $(\ell_0, b_0)$, respectively, where 
$\ell_0=\lfloor \frac{n-2-\kappa}{\kappa} \rfloor$ and $b_0=n-1- (\ell_0+1)\kappa$.  It thus follows as in the proof of \eqref{eq:either-H<H' or H'<H kappa digraph} that the 
$\kappa$-connected path-complete strong digraph $K_1 \; \overleftarrow{+} \; \overleftrightarrow{K_\kappa} \; \overleftarrow{+} \overleftrightarrow{K_{n-\kappa-2}} \; \overleftarrow{+} \; K_1$, arising from the pair $(1,1)$, has maximum size among all $\kappa$-connected path-complete strong digraphs of order $n$. Simple calculations show that its size is $n^2-2n-1$. The $\kappa$-connected path-complete strong digraph $K_1 \; \overleftarrow{+} \; [ \overleftrightarrow{K_\kappa}]^{\ell_0} \; \overleftarrow{+} \; \overleftrightarrow{K_{a_0}} \; \overleftarrow{+} \; \overleftrightarrow{K_{b_0}}$, arising from the pair
$(\ell_0,b_0)$ has minimum size among $\kappa$-connected path-complete strong digraphs of order $n$. Its size is $ \frac{n}{2}(3\kappa+n)-n-\kappa^2-b(\kappa-b)$.\\

The proof of part (b) shows that if $m(H_1) < m(H_2)$, then $H_2$ is obtained from $H_1$ by adding arcs, and the number of arcs added is a multiple of $\kappa$. Hence the number of arcs of a $\kappa$-connected path-complete strong digraph of order $n$ is congruent $n^2-2n-1$ modulo $\kappa$. If $H_1$ is a $\kappa$-connected path-complete strong digraph of order $n$, $H_1=K_1 \; \overleftarrow{+} \; [ \overleftrightarrow{K_\kappa}]^{\ell} \; \overleftarrow{+} \; \overleftrightarrow{K_a} \; \overleftarrow{+} \; \overleftrightarrow{K_b}$, then unless $(\ell,b)=(0,1)$, there exists a $\kappa$-connected path-complete strong digraph of order $n$ with exactly $\kappa$ more arcs than $H_1$: the strong digraph $K_1 \; \overleftarrow{+} \; [ \overleftrightarrow{K_\kappa}]^{\ell} \; \overleftarrow{+} \; \overleftrightarrow{K_{a+1}} \; \overleftarrow{+} \; \overleftrightarrow{K_{b-1}}$ (if $b>1$) or the strong digraph $K_1 \; \overleftarrow{+} \; [ \overleftrightarrow{K_\kappa}]^{\ell-1} \; $ $\overleftarrow{+} \; \overleftrightarrow{K_{a+1}}$ (if $\ell>0$ and $b=1$). This completes the proof of part (c).
\end{proof}

Given $n,m \in \mathbb{N}$ for which there exists a $\kappa$-connected
path-complete $(n,m)$-strong digraph, we define $DPK_{n,m,\kappa}$ to be such a graph
of minimum size. It follows from Lemma \ref{la:size-vs-remoteness-in-kappa-connected-pc-digraph}(a) 
that there exists at most one $\kappa$-connected path-complete strong digraph of given order 
and size, so $DPK_{n,m,\kappa}$ is well-defined. 

	\begin{theorem}
(a) Let $D$ be a $\kappa$-connected strong digraph of order $n$ and size $m$ with $ m \leq n^2-2n-1$. Then 
	\begin{equation} \label{eq:bound-on-rho-for-kappa-digraph}
\rho(D) \leq \rho(DPK_{n,m,\kappa}).
\end{equation}
(b) Assume that $m \equiv (n^2-2n-1) \pmod{\kappa} $ and $ \frac{n}{2}(3\kappa+n)-n-\kappa^2-b(\kappa-b) \leq m \leq n^2-2n-1$, where $b$ is the integer in $\{1,2,\ldots, \kappa\}$ with
$b \equiv n-1 \pmod{\kappa}$. Then equality in (a) holds only if $D=DPK_{n,m,\kappa}$.
	\end{theorem}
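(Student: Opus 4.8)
The plan is to prove the theorem in two stages, establishing the extremal bound \eqref{eq:bound-on-rho-for-kappa-digraph} via an averaging/rearrangement argument on the distance degree sequence, and then upgrading this to the uniqueness characterisation in part (b). Throughout, fix a vertex $v$ attaining $\rho(D)$ and write its distance degree sequence as $(n_0(v), n_1(v), \ldots, n_d(v))$ with $d = {\rm ecc}_D(v)$, so that $\overline{\sigma}(v,D) = \frac{1}{n-1}\sum_{i=1}^{d} i\, n_i(v)$. The first task is to bound the transmission $\sum_{i} i\, n_i(v)$ from above subject to the constraints imposed by the size $m$ and the $\kappa$-connectivity.

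\medskip

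First I would record the two structural constraints that control the distance layers. \textbf{Connectivity constraint.} Since $D$ is $\kappa$-connected, for every index $i$ with $1 \le i \le d-1$ the cut $N_i(v)$ must contain at least $\kappa$ vertices; otherwise removing $N_i(v)$ would separate $N_{\le i-1}(v)$ from $N_{\ge i+1}(v)$ and destroy strong connectivity. Hence $n_i(v) \ge \kappa$ for all internal layers. \textbf{Size constraint.} I would next bound the number of arcs in terms of the layer sizes: an arc $\overrightarrow{xy}$ of $D$ can only satisfy $|d_D(v,x) - d_D(v,y)| \le 1$ when oriented ``forward'' (distance increases by at most one along an out-arc), and so each layer $N_i(v)$ together with its neighbouring layers bounds the arc count. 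Concretely, the arcs incident with a vertex in layer $N_i$ either stay within the layers $N_{i-1}, N_i, N_{i+1}$ (contributing at most something quadratic in the local layer sizes) or jump backward to earlier layers; counting these gives an upper bound on $m$ that is a function of the sequence $(n_i(v))$. The key point is that making the transmission large (stretching the sequence out into many thin layers) forces $m$ to be small, which is exactly the trade-off captured by $DPK_{n,m,\kappa}$.

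\medskip

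With these constraints in hand, the core of part (a) is an optimisation: among all admissible sequences $(n_0, n_1, \ldots, n_d)$ with $n_0 = 1$, $\sum_i n_i = n$, each internal $n_i \ge \kappa$, and satisfying the size inequality $m(D) \ge m$, the transmission $\sum_i i\, n_i$ is maximised exactly by the distance degree sequence of $DPK_{n,m,\kappa}$ viewed from its ``apex'' vertex $v_0$. I would argue this by a \emph{shifting} or \emph{compression} argument: given any admissible $D$, show that one can transform its layer structure — pushing vertices into thin internal layers of size exactly $\kappa$ while concentrating the remaining mass into the last two layers — without decreasing the transmission and without violating the size or connectivity constraints, until the sequence coincides with that of a $\kappa$-connected path-complete digraph. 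Lemma~\ref{la:size-vs-remoteness-in-kappa-connected-pc-digraph}(b),(c) then identifies, among all such path-complete digraphs of order $n$, the unique one of size (at least) $m$ with maximum remoteness as precisely $DPK_{n,m,\kappa}$, giving \eqref{eq:bound-on-rho-for-kappa-digraph}.

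\medskip

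For part (b), I would trace equality through the compression argument of part (a). Under the congruence and range hypotheses on $m$, Lemma~\ref{la:size-vs-remoteness-in-kappa-connected-pc-digraph}(c) guarantees that $DPK_{n,m,\kappa}$ genuinely exists with size exactly $m$, and part (a) of that lemma guarantees it is the unique path-complete digraph of that order and size. Equality in \eqref{eq:bound-on-rho-for-kappa-digraph} then forces every inequality used in the compression to be tight: each internal layer must have size exactly $\kappa$, all backward arcs permitted by the distance structure must actually be present (since any missing arc of $\overline{D}$ strictly decreases the remoteness by Lemma~\ref{la:size-vs-remoteness-in-kappa-connected-pc-digraph}(a) applied in reverse), and the final two layers must be complete digraphs joined in both directions. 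Reassembling these tightness conditions reconstructs the adjacency structure of $DPK_{n,m,\kappa}$ exactly, yielding $D = DPK_{n,m,\kappa}$. I expect the \emph{main obstacle} to be the bookkeeping in the size constraint: correctly counting the backward arcs from layer $i$ to all earlier layers $j \le i-2$ (the feature distinguishing the directed $\overleftarrow{+}$ construction from its undirected analogue) and verifying that the compression step preserves $m \ge m(D)$ rather than merely preserving it in the undirected sense. This directed arc-counting is precisely where the ``significantly greater challenges'' flagged in the section preamble arise, and getting the inequality direction right for both forward and backward arcs is the delicate heart of the proof.
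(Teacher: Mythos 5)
Your overall route is the same as the paper's: reduce an extremal digraph to a $\kappa$-connected path-complete digraph by shifting vertices between the distance layers of the remote vertex $v$, saturate all arcs that do not alter distances from $v$, and then invoke Lemma \ref{la:size-vs-remoteness-in-kappa-connected-pc-digraph} to single out $DPK_{n,m,\kappa}$. However, two steps of your plan have genuine gaps. In part (a), a compression that ``never decreases the transmission'' cannot by itself reach the path-complete shape. The problematic layer is $N_{d-2}$: shifting one of its vertices into $N_{d-1}$ does increase the transmission, but in the saturated digraph the resulting size change is governed by $n_d - n_{d-3}$, which is negative whenever $n_d < \kappa$ (e.g.\ $n_d = 1$), so this move can push the size below $m$. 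The paper resolves this with a secondary extremal criterion --- among all maximum-remoteness $\kappa$-connected $(n,m)$-digraphs, choose $D$ of \emph{maximum size} --- and then uses a transmission-\emph{preserving} move (one vertex from $N_{d-2}$ and one from $N_d$, both into $N_{d-1}$), which strictly increases the size and contradicts that choice. Without this tie-break your process can stall at an admissible sequence with $n_{d-2} > \kappa$ that still maximises the transmission; in particular the maximising sequence is not unique, contrary to the word ``exactly'' in your optimisation claim.

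In part (b), the justification ``any missing arc of $\overline{D}$ strictly decreases the remoteness by Lemma (a) applied in reverse'' is false. For a general equality-case $D$, the missing arcs are intra-layer arcs or backward arcs, and adding such an arc changes no distance from $v$ (any path through it from $v$ is longer than the direct distance), hence leaves $\overline{\sigma}(v,\cdot)$ --- and therefore the remoteness --- unchanged. Lemma \ref{la:size-vs-remoteness-in-kappa-connected-pc-digraph}(a) applies only to digraphs already in path-complete form, where every missing arc is a forward shortcut seen from $v_0$. The correct equality argument, as in the paper, goes through monotonicity in size: if $D$ missed such an arc $\overrightarrow{xy}$, then $D + \overrightarrow{xy}$ would be a $\kappa$-connected $(n,m+1)$-digraph with the same remoteness, and applying \eqref{eq:bound-on-rho-for-kappa-digraph} to it together with Lemma \ref{la:size-vs-remoteness-in-kappa-connected-pc-digraph}(b) yields
\[
\rho(D) = \rho(D + \overrightarrow{xy}) \le \rho(DPK_{n,m+1,\kappa}) < \rho(DPK_{n,m,\kappa}),
\]
a contradiction. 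With these two repairs --- which are exactly what the paper's size-maximality choice, Claims 1--3, and its equality analysis provide --- your outline becomes the paper's proof.
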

	
	\begin{proof}
We first prove that there exists a $\kappa$-connected path-complete $(n,m)$- strong digraph $D'$ with 
\begin{equation} \label{eq:exists-pc-kappa-digraph}
\rho(D) \leq \rho(D'). 
\end{equation}
We may assume that $D$ has maximum remoteness among all $\kappa$-connected $(n,m)$- strong digraphs, and that among all such strong digraphs with maximum remoteness, $D$ is one with the maximum size. Furthermore, let $v \in V(D)$ with $\overline{\sigma}(v,D) = \rho(D)$, $d = {\rm ecc}_D(v)$, $N_{i} = \{z \in V(D) \mid d_{D}(v,z) = i \}$, and $|N_{i}| = n_i$ for $i \in \{0, 1, \ldots, d\}$. Clearly, $n_0 = 1$,  and $\sum_{i=0}^{d} n_i = n$. \\

\emph{{\bf Claim 1:}} $D=K_{n_0} \; \overleftarrow{+} \; \overleftrightarrow{K_{n_1}} \; \overleftarrow{+} \ldots \; \overleftarrow{+} \;  \overleftrightarrow{K_{n_{d-1}}} \; \overleftarrow{+} \; \overleftrightarrow{K_{n_{d}}}$. \\
 
Recall that \( D \) has the maximum size among all strong digraphs of size at least \( m \) for which \( \sigma(v, D) \) is maximised. Consequently, each \( N_i \) induces a complete digraph \( \overleftrightarrow{K_{n_i}} \) in the digraph \( D \), otherwise we could add arcs between vertices of $N_i$ without changing the remoteness of $D$. Additionally, \( D \) has arcs from every vertex in \( N_i \) to every vertex in \( N_{i+1} \), and arcs from every vertex in \( N_{i+1} \) to every vertex in \( N_i \) for \( i = 0, 1, 2, \ldots, d-1 \). Furthermore, every vertex in \( N_j \) is adjacent to every vertex in \( N_i \) for all \( j > i \) where \( i, j \in \{0, 1, \ldots, d\} \). Hence, Claim 1 follows. \\

Note that $n_i \geq \kappa$ holds for $i=1,2,\ldots,d-1$, but not for $0$.\\

\emph{{\bf Claim 2:}} For all $ i \in \{1,2, \ldots, d-3\} $, we have $ n_i = \kappa $. \\

Suppose to the contrary that there exists $ j \in \{1,\ldots,d-3\} $ with $ n_{j}>\kappa $. Let $j$ be the smallest such value. Then $n_{i} = \kappa$ for all $ i \in \{1,\ldots,j-1\} $. Now consider the strong digraph $D^*$ that is obtained from $D$ by moving a vertex from $N_j$ to $N_{j+1}$, i.e., $D^*=K_{n'_0} \; \overleftarrow{+} \; \overleftrightarrow{K_{n'_1}} \; \overleftarrow{+} \ldots \; \overleftarrow{+} \; \overleftrightarrow{K_{n'_d}}$, where
$n'_i=n_i$ for $i \in \{0,1,\ldots,d\} - \{j,j+1\}$,  
$n'_j=n_j-1$ and $n'_{j+1}= n_{j+1}+1$. Then $m(D^*) = m(D) + 2(n_{j+2} - n_{j-1}) \geq m(D)$ since $n_{j+2} \geq \kappa=n_{j-1}$ and $D^{*}$ is $(n,m)$-strong digraph with $\sigma(v,D^{*}) > \sigma (v,D)$, and thus $\rho(D^{*}) > \rho(D)$. This contradiction to the maximality of $\rho(D)$ proves Claim 2.  \\

\emph{{\bf Claim 3:}} $ n_{d-2} = \kappa $. \\

Since $D$ is $k$-connected, we have that $n_{d-2} \geq \kappa$. Now, suppose to the contrary that \( n_{d-2} \neq \kappa \). Then \( n_{d-2} > \kappa \). Consider the strong digraph \( D^* \), which is obtained from \( D \) by moving one vertex from \( N_{d-2} \) and \( N_d \) to \( N_{d-1} \), i.e., $D^*=K_{n'_0} \; \overleftarrow{+} \; \overleftrightarrow{K_{n'_1}} \; \overleftarrow{+} \ldots \; \overleftarrow{+} \; \overleftrightarrow{K_{n'_d}}$, where
$n'_i=n_i$ for $i \in \{0,1,\ldots,d-3\} - \{d-2,d-1,d\}$,  
$n'_{d-2}=n_{d-2}-1$, $n'_{d}=n_{d}-1$ and $n'_{d-1}= n_{d-1}+2$. It is easy to verify that $m(D^*) = m(D) + 2n_d - n_{d-3} + n_{d-2} - 1$. Since \( n_{d-3} = \kappa \) and \( n_{d-2} \geq \kappa+1 \), we have $
m(D^*) \geq m(D) + 2n_d-1 > m(D).
$ Moreover, \( \rho(D^*) = \rho(D) \). This contradicts our choice of $D$ as a digraph with the maximum size among those of maximum remoteness. Thus, Claim 3 follows.  \\

\noindent It follows from Claims 1 to 3 that $D$ is a $\kappa$-connected path-complete strong digraph. Letting $D'=D$ proves (\ref{eq:exists-pc-kappa-digraph}).  \\

By (\ref{eq:exists-pc-kappa-digraph}), there exists a $\kappa$-connected path-complete strong digraph $D'$ of order $n$ and size at least $m$ with $\rho(D) \leq \rho(D')$. By the definition of $DPK_{n,m,\kappa}$, we have $m(D') \geq m(DPK_{n,m,\kappa})$.  By Lemma \ref{la:size-vs-remoteness-in-kappa-connected-pc-digraph}(a), it follows that $\rho(D) \leq \rho(DPK_{n,m,\kappa}) $. Hence 
\[\rho(D) \leq \rho(D') \leq \rho(DPK_{n,m,\kappa}),\]
which proves (a). \\

(b) Now assume that equality holds in \eqref{eq:bound-on-rho-for-kappa-digraph}, i.e., that
$\rho(D) = \rho(DPK_{n,m,\kappa})$, and furthermore that $m \equiv (n^2-2n-1) \pmod{\kappa} $ and $ \frac{n}{2}(3\kappa+n)-n-\kappa^2-b(\kappa-b) \leq m \leq n^2-2n-1$, where $b$ is as defined above. 
It follows from Lemma \ref{la:size-vs-remoteness-in-kappa-connected-pc-digraph}(c) that the 
graph $DPK_{n,m,\kappa}$ has exactly $m$ arcs. \\

It follows from part (b) that $D$ has maximum remoteness among all $\kappa$-connected 
$(n,m)$-strong digraphs. We claim that $D$ has maximum size among all such strong digraphs 
maximising the remoteness. Suppose not. Then there exists a $\kappa$-connected 
$(n,m+1)$-strong digraph $D''$ with $\rho(D) = \rho(D'')$. Applying \eqref{eq:bound-on-rho-for-kappa-digraph} to $D''$ we get that
\[ \rho(D) = \rho(D'') \leq \rho(DPK_{n,m+1,\kappa}) < \rho(DPK_{n,m,\kappa}), \]
where the last inequality follows from 
Lemma \ref{la:size-vs-remoteness-in-kappa-connected-pc-digraph}(b) and the fact that 
$m(DPK_{n,m,\kappa})= m < m(DPK_{n,m+1,\kappa})$. Hence $D$ has maximum size among all $\kappa$-connected strong digraphs of order $n$ maximising the remoteness. \\

The proof of (a) shows that, if $D$ has maximum size among all $\kappa$-connected
path-complete $(n,m)$-strong digraphs, then $D$ is a path-complete strong digraph. Hence $D=DPK_{n,m',\kappa}$ for some $m'$ with $m' \geq m$. Since by Lemma \ref{la:size-vs-remoteness-in-kappa-connected-pc-digraph} we have 
$\rho(DPK_{n,m',\kappa}) < \rho(DPK_{n,m,\kappa})$ if $m'>m$, we have $m'=m$, 
and thus $D=DPK_{n,m,\kappa}$, as desired. 
\end{proof}

Evaluating the remoteness of $DPK_{n,m,\kappa}$, yields the following corollary. Assume that $m \equiv (n^2-2n-1) \pmod{\kappa} $ and $ \frac{n}{2}(3\kappa+n)-n-\kappa^2-b(\kappa-b) \leq m \leq n^2-2n-1$, where $b \in \{1,2,\ldots, \kappa\}$ with
$b \equiv n-1 \pmod{\kappa}$.
	
\begin{corollary}\label{coro:evaluation-of-kappa-connecetd-path-complete-strong-digraph}
Let $D$ be a $\kappa$-connected strong digraph of order $n$ and size $m$, with $ \frac{n}{2}(3\kappa+n)-n-\kappa^2-b(\kappa-b) \leq m \leq n^2-2n-1$, where $b \in \{1,2,\ldots, \kappa\}$ with $b \equiv n-1 \pmod{\kappa}$. Let $m^{*}$ be the smallest integer with $m^* \geq m$ and $m^* \equiv (n^2-2n-1) \pmod{\kappa} $. Then 
\[ \rho(D) \leq  \frac{n}{\kappa}+2-\frac{1}{\kappa}-\frac{\kappa-1}{n-1}-\frac{m^*}{\kappa(n-1)}, \]
and this bound is sharp.  
\end{corollary}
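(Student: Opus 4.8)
The plan is to reduce the statement to a direct evaluation of $\rho(DPK_{n,m,\kappa})$ and then carry out that evaluation. By inequality \eqref{eq:bound-on-rho-for-kappa-digraph} we already have $\rho(D) \le \rho(DPK_{n,m,\kappa})$, so it suffices to compute $\rho(DPK_{n,m,\kappa})$ and show it equals the claimed right-hand side. First I would record that $m(DPK_{n,m,\kappa}) = m^*$: by Lemma~\ref{la:size-vs-remoteness-in-kappa-connected-pc-digraph}(c) the size of any $\kappa$-connected path-complete strong digraph of order $n$ is congruent to $n^2-2n-1$ modulo $\kappa$, and the admissible sizes form an arithmetic progression with common difference $\kappa$; since $DPK_{n,m,\kappa}$ is, by definition, the one of least size that is at least $m$, its size is precisely the least integer $m^* \ge m$ with $m^* \equiv n^2-2n-1 \pmod{\kappa}$, which is the quantity in the statement.

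Next, I would write $DPK_{n,m,\kappa} = K_1 \; \overleftarrow{+} \; [\overleftrightarrow{K_\kappa}]^{\ell} \; \overleftarrow{+} \; \overleftrightarrow{K_a} \; \overleftarrow{+} \; \overleftrightarrow{K_b}$ and let $v_0$ be the vertex of the leftmost $K_1$. By Lemma~\ref{la:size-vs-remoteness-in-kappa-connected-pc-digraph}(a), $v_0$ uniquely attains the remoteness, so $\rho(DPK_{n,m,\kappa}) = \overline{\sigma}(v_0) = \tfrac{1}{n-1}\sigma(v_0)$, and I only need the transmission $\sigma(v_0)$. The distance structure from $v_0$ is transparent: $n_0 = 1$, $n_i = \kappa$ for $1 \le i \le \ell$, $n_{\ell+1} = a$, $n_{\ell+2} = b$, with eccentricity $d=\ell+2$ and $n-1 = \ell\kappa + a + b$. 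Summing the distances (most cleanly through the tail-sum $\sigma(v_0) = \sum_{k\ge 1} |N_{\ge k}(v_0)|$) gives
\[ \sigma(v_0) = \kappa\binom{\ell+1}{2} + a(\ell+1) + b(\ell+2). \]

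The heart of the argument, and the main obstacle, is to eliminate the parameters $\ell, a, b$ in favour of $n, \kappa, m^*$. For this I would compute the size via the arcs \emph{absent} from $DPK_{n,m,\kappa}$: an arc $\overrightarrow{uw}$ is missing precisely when $w$ lies in a block at least two positions to the right of the block of $u$, so the number of missing arcs equals $\sum_{i\ge 0} n_i\,|N_{\ge i+2}(v_0)|$, whence $m^* = n(n-1) - \sum_{i\ge 0} n_i\,|N_{\ge i+2}(v_0)|$. Evaluating this sum with the distance degrees above and simplifying—the one genuinely tedious calculation—yields the clean identity
\[ \kappa\,\sigma(v_0) + m^* = (n-1)^2 + 2\kappa(n-1) - \kappa(\kappa-1). \]
Solving for $\sigma(v_0)$, dividing by $n-1$, and rewriting $\tfrac{n-1}{\kappa} = \tfrac{n}{\kappa} - \tfrac{1}{\kappa}$ reproduces exactly the asserted expression $\tfrac{n}{\kappa} + 2 - \tfrac{1}{\kappa} - \tfrac{\kappa-1}{n-1} - \tfrac{m^*}{\kappa(n-1)}$.

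Finally, sharpness is immediate: the digraph $DPK_{n,m,\kappa}$ is $\kappa$-connected, strong, of order $n$, and of size $m^* \ge m$, so it is admissible in the statement, and the computation above shows that it meets the bound with equality. The only points demanding care are the congruence-and-monotonicity bookkeeping that identifies $m(DPK_{n,m,\kappa})$ with $m^*$, and the arc-counting simplification leading to the displayed identity; the range hypothesis on $m$ guarantees (via Lemma~\ref{la:size-vs-remoteness-in-kappa-connected-pc-digraph}(c)) that a $\kappa$-connected path-complete digraph of the required size actually exists, so no degenerate cases arise.
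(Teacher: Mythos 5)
Your proposal is correct and takes essentially the same route as the paper's own proof: both reduce the statement to evaluating $\rho(DPK_{n,m,\kappa})$ via part (a) of the theorem, both identify $m(DPK_{n,m,\kappa})=m^{*}$ through Lemma~\ref{la:size-vs-remoteness-in-kappa-connected-pc-digraph}(c), and both compute $\sigma(v_0)$ and the size in terms of $\ell,a,b,\kappa$ before simplifying to the stated expression, with sharpness witnessed by $DPK_{n,m,\kappa}$ itself. The only difference is bookkeeping: the paper computes the size by decomposing the digraph into $H$ plus the last two complete blocks and then verifies that the discrepancy $\epsilon$ vanishes, whereas you count the missing arcs $\sum_{i} n_i\,|N_{\geq i+2}(v_0)|$ and isolate the clean identity $\kappa\,\sigma(v_0)+m^{*}=(n-1)^2+2\kappa(n-1)-\kappa(\kappa-1)$, which checks out and yields the same formula.
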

 
\begin{proof}
Let $m^*$ be as defined above. It follows from Lemma \ref{la:size-vs-remoteness-in-kappa-connected-pc-digraph} that
the digraph $DPK_{n,m,\kappa}$ has size $m^*$. Note that $a+b=n-\ell\kappa-1$. Let $v_0$ be the vertex in first $K_1$ of $DPK_{n,m,\kappa}$, so $v_0$ realizes the remoteness. 
Let $H:= DPK_{n,m,\kappa}-V(\overleftrightarrow{K_a} \cup \overleftrightarrow{K_b})$. 
Straightforward calculations show that $n(H)=\ell\kappa+1$, 
$\sigma(v_0,H)=\frac{\ell\kappa}{2}(\ell+1)$ and 
$m(H)=\frac{1}{2}(\ell\kappa^2)(\ell+3)-\kappa(\kappa-1)$. Hence
\[\sigma(v_0,DPK_{n,m,\kappa})=\sigma(v_0,H)+(\ell+1)(a+b)+b=\frac{\kappa\ell}{2}(\ell+1)+(\ell+1)(a+b)+b,\]
and, since $m(DPK_{n,m,\kappa})=m^*$,  
\begin{align*}
m^* =& \; m(H)+2{a+b \choose 2}+2\kappa a +(a+b)\ell\kappa-(\kappa-1)a+b \\
=& \; \frac{1}{2}(\ell\kappa^2)(\ell+3)-\kappa(\kappa-1)+(a+b)(a+b-1)+2\kappa a +(a+b)\ell\kappa\\
&\;-(\kappa-1)a+b.
\end{align*}
Define $\epsilon=\rho(DPK_{n,m,\kappa})-\left(\frac{n}{\kappa}+2-\frac{1}{\kappa}-\frac{\kappa-1}{n-1}-\frac{m^*}{\kappa(n-1)}\right)$. 
Substituting the above terms for $\rho(DPK_{n,m,\kappa})$ and $m^*$, it is straightforward 
to verify that $\epsilon=0$. 
This proves that 
$\rho(DPK_{n,m,\kappa}) 
     =\frac{n}{\kappa}+2-\frac{1}{\kappa}-\frac{\kappa-1}{n-1}-\frac{m^*}{\kappa(n-1)}$,
and the corollary follows.
\end{proof}

For $\kappa=1$, Corollary \ref{coro:evaluation-of-kappa-connecetd-path-complete-strong-digraph} yields
the following corollary. Note that for $\kappa=1$ we have $m^*=m$.

\begin{corollary}   \label{coro:evaluation-of-path-complete digraph}
Let $D$ be a strong digraph of order $n$ and size $m$, with $ \frac{(n-1)(n+2)}{2} \leq m \leq n^2-2n-1$. Then 
\[ \rho(D) \leq  n+1-\frac{m}{n-1}, \]
and this bound is sharp.  
\end{corollary}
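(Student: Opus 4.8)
The plan is to derive Corollary~\ref{coro:evaluation-of-path-complete digraph} as a direct specialisation of Corollary~\ref{coro:evaluation-of-kappa-connecetd-path-complete-strong-digraph} by setting $\kappa=1$ and simplifying. First I would verify the preliminary remark that $m^{*}=m$ when $\kappa=1$: the congruence condition $m \equiv (n^{2}-2n-1) \pmod{\kappa}$ becomes vacuous for $\kappa=1$ (every integer is congruent modulo $1$), so the smallest integer $m^{*} \geq m$ satisfying it is simply $m^{*}=m$. Hence no rounding up is needed in the $1$-connected case, and I can substitute $m^{*}=m$ throughout.

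Next I would confirm that the hypothesis interval in Corollary~\ref{coro:evaluation-of-kappa-connecetd-path-complete-strong-digraph} collapses to the stated one. With $\kappa=1$, the integer $b \in \{1,\ldots,\kappa\}$ with $b \equiv n-1 \pmod{\kappa}$ is forced to be $b=1$, so the lower bound $\frac{n}{2}(3\kappa+n)-n-\kappa^{2}-b(\kappa-b)$ becomes
\[
\frac{n}{2}(3+n)-n-1-1\cdot(1-1) \;=\; \frac{n(n+3)}{2}-n-1 \;=\; \frac{(n-1)(n+2)}{2},
\]
matching the left endpoint in the corollary, while the upper endpoint $n^{2}-2n-1$ is already independent of $\kappa$. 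So the hypothesis $\frac{(n-1)(n+2)}{2}\leq m \leq n^{2}-2n-1$ is exactly the $\kappa=1$ instance.

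Finally I would specialise the bound itself. Setting $\kappa=1$ and $m^{*}=m$ in
\[
\frac{n}{\kappa}+2-\frac{1}{\kappa}-\frac{\kappa-1}{n-1}-\frac{m^{*}}{\kappa(n-1)}
\]
gives $n+2-1-0-\frac{m}{n-1}=n+1-\frac{m}{n-1}$, which is the asserted bound. Sharpness is inherited directly: the extremal digraph $DPK_{n,m,1}$ (the $1$-connected path-complete digraph, i.e.\ the path-complete digraph) attains equality in Corollary~\ref{coro:evaluation-of-kappa-connecetd-path-complete-strong-digraph}, and by the preceding theorem it is the unique extremal digraph under the stated constraints, so equality is achieved. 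I do not anticipate a genuine obstacle here, since the argument is purely a substitution; the only point requiring care is the justification that the modular condition becomes trivial and forces $b=1$ and $m^{*}=m$, which I would state explicitly rather than leave implicit, so that the reduction of both the hypotheses and the bound to the $\kappa=1$ form is transparent.
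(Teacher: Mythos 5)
Your proposal is correct and matches the paper's own route exactly: the paper derives this corollary by setting $\kappa=1$ in Corollary~\ref{coro:evaluation-of-kappa-connecetd-path-complete-strong-digraph}, noting that $m^{*}=m$ in this case, and your verification that $b=1$ forces the lower endpoint to equal $\frac{(n-1)(n+2)}{2}$ and that the bound simplifies to $n+1-\frac{m}{n-1}$ is the same (here made more explicit) substitution argument.
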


Note that the bound in the above corollary also holds for \( m < \frac{(n-1)(n+2)}{2} \), i.e., for smaller values of \( m \), and it is sharp; however, the extremal digraph is not unique.

\section{Extending bounds on remoteness of graphs for given size and connectivity constraints to Eulerian digraphs }

In this section we give sharp upper bounds on the remoteness of an Eulerian digraph with given order $n$ and size at least $m$. We do the same for Eulerian digraphs with connectivity constraints such as vertex connectivity, as well as similar bounds when vertex connectivity is replaced by edge-connectivity for $\lambda\in\{2, 3\}$. There are bounds on distances in Eulerian digraphs that improve bounds for strong, and not necessarily Eulerian, digraphs (see \cite{Dan2021}, \cite{Maf2020}).\\

%Recall that for a strong digraphs $D$, if \( i \in \mathbb{N} \) and \( v \in V(D) \), then the set \( N_{D}^{+i}(v) \) denotes all vertices at distance \( i \) from \( v \), and we let \( n_i(D) = |N_{D}^{+i}(v)| \) be its cardinality. The \emph{distance degree} of \( v \) in $D$, denoted by \( X_D(v) \), is the sequence$X_D(v) = \big( n_0(v),\, n_1(v),\, n_2(v),\, \ldots,\, n_d(v) \big),$where \( d = ecc_D^+(v) \).\\

Dankelmann \cite{Dan2021} established an upper bound on the size of an Eulerian digraph for given diameter and order \( n \). 

\begin{theorem}\label{th:m(D)}
{\rm (Dankelmann \cite{Dan2021})}\\
Let $D$ be an Eulerian digraph of diameter $d$, and $v$ be a vertex of eccentricity $d$. If $X(v)=(n_0,\ldots,n_d)$ then
\[m(D) \leq 2m(K_{n_0}+K_{n_1}+\ldots+K_{n_d}),\]
equality holds if and only if $D=\overleftrightarrow{K_{n_0}} + \overleftrightarrow{K_{n_1}} + \ldots + \overleftrightarrow{K_{n_{d-2}}} + \overleftrightarrow{K_{n_{d-1}}} + \overleftrightarrow{K_{n_d}}$.
\end{theorem}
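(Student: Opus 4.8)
The plan is to study $D$ through the distance layers $N_0,N_1,\ldots,N_d$ of $v$ and to combine two ingredients: the elementary BFS-type fact that an arc cannot increase the distance from $v$ by more than one, and the Eulerian balance condition applied to the cuts that separate the initial segments $N_{\leq i}$ from the tails $N_{\geq i+1}$. Throughout I write $A(S,T)$ for the number of arcs with tail in $S$ and head in $T$.

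First I would classify every arc of $D$ by how it meets the layering. If $x\to y$ is an arc with $x\in N_a$, then $d_D(v,y)\leq a+1$, so $y\in N_b$ with $b\leq a+1$. Hence every arc is of exactly one of three types: a \emph{within-layer} arc (both ends in the same $N_i$), a \emph{forward} arc (from some $N_i$ to $N_{i+1}$), or a \emph{backward} arc (from some $N_j$ to $N_k$ with $j>k$). Let $W$, $F$, $B$ be their respective totals, so $m(D)=W+F+B$. The first two are bounded at once: the within-layer arcs number at most $\sum_{i=0}^{d} n_i(n_i-1)=2\sum_{i=0}^{d}\binom{n_i}{2}$, and the forward arcs number at most $\sum_{i=0}^{d-1} n_i n_{i+1}$.

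The heart of the argument is to bound $B$, and here I would invoke the Eulerian hypothesis. Summing $d^+(x)=d^-(x)$ over a vertex set $S$ gives $A(S,V\setminus S)=A(V\setminus S,S)$. Applying this with $S=N_{\leq i}$, and noting that the no-forward-jump property forces $A(N_{\leq i},N_{\geq i+1})=A(N_i,N_{i+1})$, I obtain for each $i$ the identity $A(N_{\geq i+1},N_{\leq i})=A(N_i,N_{i+1})$. Now sum over $i$: on the right this is exactly $F$, while on the left each backward arc from $N_j$ to $N_k$ is counted once for every cut it crosses, i.e.\ with multiplicity $j-k\geq 1$. Therefore
\[
B \;\leq\; \sum_{i=0}^{d-1} A(N_{\geq i+1},N_{\leq i}) \;=\; \sum_{i=0}^{d-1} A(N_i,N_{i+1}) \;=\; F.
\]
Combining the three estimates gives $m(D)=W+F+B\leq 2\sum_i\binom{n_i}{2}+2\sum_i n_i n_{i+1}=2m(K_{n_0}+\cdots+K_{n_d})$, which is the asserted bound.

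For the equality characterisation I would trace when each inequality is tight. Maximality of $W$ forces every $N_i$ to induce $\overleftrightarrow{K_{n_i}}$; the bound $F=\sum_i n_i n_{i+1}$ forces all forward arcs between consecutive layers to be present; and $B=F$ forces every backward arc to have span exactly $1$ (otherwise the span-weighted count strictly exceeds $B$), after which $B=F=\sum_i n_i n_{i+1}$ forces all span-one backward arcs to be present and none of larger span to exist. These conditions pin $D$ down to the symmetric sequential sum $\overleftrightarrow{K_{n_0}}+\overleftrightarrow{K_{n_1}}+\cdots+\overleftrightarrow{K_{n_d}}$, which is strong and manifestly Eulerian and realises equality, completing both directions. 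I expect the main obstacle to be precisely the backward-arc estimate: setting up the span-weighted double counting correctly, and, in the equality analysis, arguing carefully that tightness of $B\leq F$ excludes every backward arc spanning two or more layers.
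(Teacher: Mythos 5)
Your proof is correct, but note that the paper contains no proof of this statement to compare against: Theorem \ref{th:m(D)} is imported verbatim from Dankelmann \cite{Dan2021} and used as a black box. Judged on its own merits, your argument is complete and sound. The three-way classification of arcs via the BFS property $d_D(v,y)\leq d_D(v,x)+1$, the bounds on within-layer and forward arcs, and especially the key step — applying the Eulerian balance $A(S,V\setminus S)=A(V\setminus S,S)$ to $S=N_{\leq i}$, observing that the no-forward-jump property collapses $A(N_{\leq i},N_{\geq i+1})$ to $A(N_i,N_{i+1})$, and then summing so that each backward arc is counted with multiplicity equal to its span — are all correctly executed and yield $B\leq F$ exactly as needed. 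The equality analysis is also handled properly: tightness of the span-weighted count forces every backward arc to have span one, after which the per-cut Eulerian identity together with maximality of $F$ forces all reverse arcs between consecutive layers to be present, pinning $D$ down to the symmetric sequential sum $\overleftrightarrow{K_{n_0}}+\cdots+\overleftrightarrow{K_{n_d}}$, which conversely is Eulerian and attains the bound. This is essentially the natural route to the result (and, as far as one can tell, the same combinatorial mechanism as in the cited source: layer decomposition plus the in-degree/out-degree balance across the cuts $(N_{\leq i},N_{\geq i+1})$), so no gap remains.
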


%Theorem \ref{th:m(D)} and results obtained in is the principal tool used in deriving the results that follows in this section. 

%

We now apply Theorem \ref{th:m(D)} in order to obtain sharp upper bounds on the maximum remoteness for Eulerian digraphs of given order, size and vertex connectivity from sharp bounds for graphs. We first introduce the additional notation required to understand the corresponding results for graphs proved by Dankelmann et al. \cite{DanMafMal2025}.

\begin{definition} \label{def:kappa-connected-path-complete}
{\rm (Dankelmann, Mafunda, Mallu~\cite{DanMafMal2025})}\\
A graph $G$ is said to be a $\kappa$-connected path-complete graph if there exist
$\ell, a, b \in \mathbb{N}$, $a \geq \kappa$, with  
\[ G = K_1 + [K_{\kappa}]^{\ell} + K_a + K_b.\]
\end{definition}

For given $n,m \in \mathbb{N}$ for which there exists a $\kappa$-connected path-complete graph of order $n$ and size at least $m$, Dankelmann et al. \cite{DanMafMal2025} defined $PK_{n,m,\kappa}$ to be such a graph of minimum size. Note that for graphs of diameter greater than $2$, Definition \ref{def:kappa-connected-path-complete} generalises the path-complete graphs defined by Solt\'{e}s in \cite{Sol1991}, which are the $1$-connected path-complete graphs.

\begin{theorem}\label{theo:remoteness-of-kappa-connected-graphs}
{\rm (Dankelmann, Mafunda, Mallu~\cite{DanMafMal2025})}\\
Let $G$ be a $\kappa$-connected graph of order $n$ and size at least $m$ with $m \leq \binom{n-1}{2}$. Then 
\begin{equation*} 
\rho(G) \leq \rho (PK_{n,m,\kappa})=\frac{n}{2\kappa} +2 - \frac{1}{\kappa} - \frac{\kappa-1}{n-1} - \frac{m}{\kappa(n-1)}.
\end{equation*}
Equality holds only if $G= PK_{n,m,\kappa}$ and $m$ is congruent $\binom{n-1}{2}$ mod $\kappa$ and 
$\frac{1}{2} [ n(3\kappa-1) -2\kappa^2-\kappa+1 - b(\kappa-b) ] 
\leq m
\leq \binom{n-1}{2}$,
where $b \in \{1,2,\ldots,\kappa\}$ 
with $b \equiv \binom{n-1}{2} \pmod{\kappa}$.  
\end{theorem}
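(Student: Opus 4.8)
The plan is to mirror, in the undirected setting, the argument just carried out for strong digraphs. First I would establish the graph analogue of Lemma~\ref{la:size-vs-remoteness-in-kappa-connected-pc-digraph}: for a $\kappa$-connected path-complete graph $G = K_1 + [K_\kappa]^{\ell} + K_a + K_b$, (a) $\rho(G+e) < \rho(G)$ for every non-edge $e$, since the vertex $v_0$ of the leftmost $K_1$ uniquely realises $\rho(G)$ and any added edge shortens some distance from $v_0$; (b) any two such graphs of the same order are comparable under edge addition, so among them smaller size means larger remoteness; and (c) a description of the attainable orders and sizes together with the congruence of $m$ modulo $\kappa$. Part (b) is proved exactly as in the digraph case, by an explicit edge-shuffling that transforms the lexicographically larger pair $(\ell,b)$ into the smaller one by successively merging the rightmost cliques; the only change is that each merge adds $\kappa$ edges rather than $2\kappa$ arcs.

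For the main inequality I would take $G$ to have maximum remoteness among all $\kappa$-connected $(n,m)$-graphs and, subject to that, maximum size. Fixing a vertex $v$ with $\overline{\sigma}(v,G)=\rho(G)$, eccentricity $d$, distance layers $N_i$ and sizes $n_i$, the three claims go through as before. Claim~1: maximality of size forces each $N_i$ to induce a clique, all edges between consecutive layers to be present and no edges between non-consecutive layers, so $G = K_{n_0} + K_{n_1} + \dots + K_{n_d}$, while $\kappa$-connectivity forces $n_i \geq \kappa$ for $1 \le i \le d-1$. Claim~2: if $n_j > \kappa$ for some least $j \le d-3$, moving one vertex from $N_j$ to $N_{j+1}$ changes the size by $n_{j+2}-n_{j-1} \geq 0$ (using $n_{j+2}\geq\kappa\geq n_{j-1}$) while raising $\sigma(v,\cdot)$ by $1$, contradicting maximality of $\rho$. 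Claim~3: if $n_{d-2}>\kappa$, a redistribution moving one vertex each from $N_{d-2}$ and $N_d$ into $N_{d-1}$ leaves $\sigma(v,\cdot)$ unchanged but strictly increases the size, contradicting maximality of size. Hence $G$ is a $\kappa$-connected path-complete graph, and part~(b) of the lemma yields $\rho(G) \le \rho(PK_{n,m,\kappa})$.

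To obtain the explicit value I would, as in Corollary~\ref{coro:evaluation-of-kappa-connecetd-path-complete-strong-digraph}, split the minimal path-complete graph into an initial part $H = K_1 + [K_\kappa]^{\ell}$ and the trailing $K_a + K_b$, compute $n(H)$, $\sigma(v_0,H)$ and $m(H)$ in closed form, assemble $\sigma(v_0,PK_{n,m,\kappa})$ and $m(PK_{n,m,\kappa})$, and verify algebraically that $\rho(PK_{n,m,\kappa}) - \left(\frac{n}{2\kappa} + 2 - \frac{1}{\kappa} - \frac{\kappa-1}{n-1} - \frac{m}{\kappa(n-1)}\right) = 0$. For the equality statement I would argue, under the stated congruence and range hypotheses on $m$ (which guarantee $m(PK_{n,m,\kappa}) = m$), exactly as in part~(b) of the preceding theorem: equality forces $G$ to have maximum size among maximum-remoteness graphs, for otherwise comparing with $PK_{n,m+1,\kappa}$ through the monotonicity in (b) gives a strict drop; the size-maximal case of the reduction then shows $G$ is path-complete of size $m$, i.e. $G = PK_{n,m,\kappa}$.

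I expect the main obstacle to be the structural reduction in Claims~2 and~3: verifying that each vertex-shifting operation simultaneously preserves $\kappa$-connectivity, keeps the size at least $m$, and moves $\sigma(v,\cdot)$ in the intended direction, while correctly handling the degenerate boundary behaviour near the first layer (where $n_{j-1}$ may equal $n_0 = 1$) and near the last three layers $N_{d-2}, N_{d-1}, N_d$ (where the generic degree count breaks down and one must ensure $N_d$ stays nonempty and $v$ remains remoteness-maximising). The undirected case is genuinely simpler than the digraph one, with no backward-arc bookkeeping and no factor of two, so the proof is a clean specialisation, but this boundary case analysis is where the real work lies.
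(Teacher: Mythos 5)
This statement is quoted from \cite{DanMafMal2025} and is not proved in this paper; the only proof of it available here is the paper's own directed analogue in Section~3 (Lemma~\ref{la:size-vs-remoteness-in-kappa-connected-pc-digraph}, the theorem on $DPK_{n,m,\kappa}$, and Corollary~\ref{coro:evaluation-of-kappa-connecetd-path-complete-strong-digraph}). Your proposal is a correct undirected specialisation of exactly that argument --- the monotonicity lemma for path-complete graphs, the extremal choice of maximum remoteness and then maximum size, Claims~1--3 reducing the extremal graph to a path-complete one, the closed-form evaluation, and the equality analysis via $PK_{n,m+1,\kappa}$ --- so it takes essentially the same approach.
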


\begin{theorem}\label{th:rho(vertex connectivity - D)}
Let $ D $ be a $\kappa$-connected Eulerian digraph of order $ n $ and size at least $ 2m_{0} $, where $m_0 \in \mathbb{N}$ . Then 
\[\rho(D) \leq \; \rho(\overleftrightarrow{PK_{n,m_0, \kappa}}) \; =\frac{n}{2\kappa} +2 - \frac{1}{\kappa} - \frac{\kappa-1}{n-1} - \frac{m_0}{\kappa(n-1)}. \]
The bound is sharp if $m_0 \equiv \binom{n-1}{2} \pmod{\kappa}$ and
$\frac{1}{2} [ n(3\kappa-1) -2\kappa^2-\kappa+1 - b(\kappa-b) ] 
\leq m_0
\leq \binom{n-1}{2}$,
where $b$ is the integer in $\{1,2,\ldots,\kappa\}$ 
with $b \equiv \binom{n-1}{2} \pmod{\kappa}$.
\end{theorem}
\begin{proof}
Let $D$ be an arbitrary $\kappa$-connected Eulerian digraph of order $n$ and size at least $2m_{0}$. Assume $v$ is chosen such that $\rho(D)=\overline{\sigma}(v,D)$. Let $X_{D}(v)=(n_{0},n_{1},n_{2},\ldots,n_{d})$ be the distance degree of $v$, where $d \in \mathbb{N}$. Furthermore, note that for all $ i \in \{1,2, \ldots, d-1\} $, we have that $ n_i \geq \kappa $, since each $N_i$ is a cutset of $D$. \\ 

\noindent Let $D'$ be the digraph $\overleftrightarrow{K_{n_0}} + \overleftrightarrow{K_{n_1}} + \ldots + \overleftrightarrow{K_{n_{d-2}}} + \overleftrightarrow{K_{n_{d-1}}} + \overleftrightarrow{K_{n_d}}$. Clearly the digraph $D'$ is Eulerian with the same order as $D$ and is $\kappa$-connected since $X_{D}(v)=X_{D'}(v)$. Furthermore, $\rho(D')=\rho(D)$ and by Theorem \ref{th:m(D)} we have that $m(D') \geq m(D) \geq 2m_0$.\\

\noindent Consider the underlying graph $G'$ of $D'$.
Note that the graph $G'$ is $\kappa$-connected since  $X_{G}(v)=X_{D'}(v)$ and $\rho(G')=\rho(D')$. Since $m(D') = 2m(G')$, we have that $m(G') \geq m_0$ and by Theorem \ref{theo:remoteness-of-kappa-connected-graphs}, we have that $\rho(G') \leq \rho(PK_{n,m_0, \kappa})$. \\

\noindent Now we consider the digraph $\overleftrightarrow{PK_{n,m_0, \kappa}}$. Note that $\overleftrightarrow{PK_{n,m_0, \kappa}}$ is also $\kappa$-connected Eulerian digraph with size at least $2m_0$ since  $m(\overleftrightarrow{PK_{n,m_0, \kappa}})=2m(PK_{n,m_0, \kappa})$ and further $\rho(PK_{n,m_0, \kappa})=\rho(\overleftrightarrow{PK_{n,m_0, \kappa}})$. Hence we obtain that 

\[ \rho(D) = \rho(D') = \rho(G') 
       \leq \rho(PK_{n,m_0,\kappa})
    = \rho(\overleftrightarrow{PK_{n,m_0,\kappa}}).\]

If $m_0 \equiv \binom{n-1}{2} \pmod{\kappa}$ and
$\frac{1}{2} [ n(3\kappa-1) -2\kappa^2-\kappa+1 - b(\kappa-b) ] 
\leq m_0
\leq \binom{n-1}{2}$, the Eulerian digraph $\overleftrightarrow{PK_{n,m_0,\kappa}}$, which  has size $2m_0$, shows that the bound is sharp.
 \end{proof}
 
The following corollary is an immediate consequence of Theorem \ref{th:rho(vertex connectivity - D)}.

\begin{corollary}\label{coro:rho(D)}
Let $ D $ be an Eulerian digraph of order $ n $ and size at least $ 2m_{0} $, where $m_0 \in \mathbb{N}$. Then 
\[\rho(D) \leq \; \rho(\overleftrightarrow{PK_{n,m_0}}) \; = \dfrac{n+2}{2}-\dfrac{m_0}{n-1}. \]
This bound is sharp.
\end{corollary}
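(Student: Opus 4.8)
The plan is to derive Corollary~\ref{coro:rho(D)} directly from Theorem~\ref{th:rho(vertex connectivity - D)} by specialising to the case $\kappa = 1$. The key observation is that an Eulerian digraph is automatically strongly connected, hence $1$-connected, so every Eulerian digraph of order $n$ and size at least $2m_0$ is in particular a $1$-connected Eulerian digraph of that order and size. Thus Theorem~\ref{th:rho(vertex connectivity - D)} applies verbatim with $\kappa = 1$, giving
\[ \rho(D) \leq \rho(\overleftrightarrow{PK_{n,m_0,1}}) = \frac{n}{2} + 2 - 1 - 0 - \frac{m_0}{n-1} = \frac{n+2}{2} - \frac{m_0}{n-1}, \]
where I have simply substituted $\kappa = 1$ into the closed-form expression, noting that the terms $\frac{1}{\kappa}$ and $\frac{\kappa-1}{n-1}$ collapse to $1$ and $0$ respectively. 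So the main content of the corollary is nothing more than this substitution, together with the identification $\overleftrightarrow{PK_{n,m_0,1}} = \overleftrightarrow{PK_{n,m_0}}$ (the $1$-connected path-complete graph being the ordinary path-complete graph of Solt\'{e}s).

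Next I would address sharpness. Here I expect to invoke the sharpness clause of Theorem~\ref{th:rho(vertex connectivity - D)}, which requires the two congruence and range conditions on $m_0$. The subtle point is that the corollary claims sharpness unconditionally, whereas the theorem's sharpness was conditional on $m_0 \equiv \binom{n-1}{2} \pmod{\kappa}$ and a lower bound on $m_0$. For $\kappa = 1$ the congruence condition $m_0 \equiv \binom{n-1}{2} \pmod 1$ is \emph{vacuously true}, since every integer is congruent modulo $1$, and with $b = 1$ (the unique element of $\{1\}$) the lower bound $\frac{1}{2}[n(3\cdot 1 - 1) - 2 - 1 + 1 - 1\cdot 0] = \frac{1}{2}(2n - 2) = n-1$ is easily met. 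I would therefore remark that for $\kappa = 1$ both side conditions are automatically satisfied in the relevant range, so the extremal digraph $\overleftrightarrow{PK_{n,m_0}}$ of size exactly $2m_0$ realises equality, establishing unconditional sharpness.

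The only genuine obstacle I anticipate is a bookkeeping one rather than a conceptual one: I must verify that the closed-form remoteness value and the sharpness conditions of Theorem~\ref{th:rho(vertex connectivity - D)} really do degenerate cleanly at $\kappa = 1$, and in particular that no range of $m_0$ is lost. One should check that the hypothesis $m \leq \binom{n-1}{2}$ carried through from Theorem~\ref{theo:remoteness-of-kappa-connected-graphs} is consistent with the claim, since the corollary as stated imposes only $m_0 \in \mathbb{N}$ with size at least $2m_0$. If the underlying graph bound requires $m_0 \leq \binom{n-1}{2}$, then strictly speaking the corollary's sharpness should be read within that range as well; I would either state this range explicitly or note (as the paper does in the remark following Corollary~\ref{coro:evaluation-of-path-complete digraph}) that the bound persists for smaller $m_0$ with the extremal example no longer unique. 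Assembling these pieces, the proof is essentially a single line of specialisation plus a short verification that the $\kappa = 1$ side conditions hold automatically.
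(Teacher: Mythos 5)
Your proposal is correct and takes exactly the paper's route: the paper offers no separate argument, declaring the corollary ``an immediate consequence of Theorem~\ref{th:rho(vertex connectivity - D)}'', which is precisely the $\kappa=1$ specialisation you perform, with the closed form collapsing to $\tfrac{n+2}{2}-\tfrac{m_0}{n-1}$. Your extra verification that the sharpness side conditions degenerate at $\kappa=1$ (the congruence is vacuous and the range condition becomes $n-1\le m_0\le\binom{n-1}{2}$) is careful bookkeeping that the paper leaves implicit, and your caveat about $m_0$ outside that range is consistent with the paper's remark following Corollary~\ref{coro:evaluation-of-path-complete digraph}.
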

 
Next we use Theorem \ref{th:m(D)} to obtain a sharp upper bound on the size of Eulerian digraphs of given order, size, and $\lambda$-edge-connectivity for $\lambda \in \{2,3\}$. We first introduce the additional notation needed to understand the corresponding results for graphs proved by Dankelmann et al. \cite{DanMafMal2025}.

\begin{definition}\label{deflambda}
{\rm (Dankelmann, Mafunda, Mallu~\cite{DanMafMal2025})}\\
Let $\lambda\in\{2,3\}$. A graph $G$ is said to be a $\lambda$-edge-connected path-complete graph if there exist
$k \in \mathbb{N}\cup\{0\}$ and  $a, b \in \mathbb{N}$ with
\[ G = \left\{ \begin{array}{cc}        
  \left[K_{1}+K_{\lambda}\right]^{k}+K_{a}+K_{b} 
  & \textrm{if $k \geq 1$ and $ab \geq \lambda$, or}\\
  \left[K_{1}+K_{\lambda}\right]^{k}+K_{1}+K_{a}+K_{b} 
& \textrm{if $a \geq \lambda$, or}\\ 
\left[K_1 + K_3\right]^k + K_2 + K_a + K_1 
& \textrm{if $\lambda=3$, $k \geq 1$ and $a \geq 3$.}          
 \end{array} \right. \]
 %Denote the graphs defined above by $G_1^{\lambda}(n,k,b)$, $G_2^{\lambda}(n,k,b)$ and $G_3^{\lambda}(n,k)$, respectively, where in each case $n$ is the order of the graph.
\end{definition}

For given $n,m \in \mathbb{N}$ for which there exists a $\lambda$-edge-connected path-complete graph of order $n$ and size at least $m$, Dankelmann et al. \cite{DanMafMal2025} defined $PK_{n,m}^{\lambda}$ to be such a graph of minimum size, where  $\lambda\in\{2,3\}$.

\begin{theorem} \label{theo: remoteness of edge connectivity}
{\rm (Dankelmann, Mafunda, Mallu~\cite{DanMafMal2025})}\\
Let $\lambda \in \{2,3\}$ and let $G$ be a $\lambda$-edge-connected graph of order $n$ and size at least $m$. Then 
\[ \rho(G) \leq \rho(PK^{\lambda}_{n,m}). \]
\end{theorem}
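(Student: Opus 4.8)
The plan is to follow the same extremal strategy used for the $\kappa$-connected graphs in Theorem~\ref{theo:remoteness-of-kappa-connected-graphs} and for the strong digraphs earlier in this paper, adapting the connectivity bookkeeping from a vertex cut to an edge cut. First I would fix $\lambda \in \{2,3\}$ and, among all $\lambda$-edge-connected graphs of order $n$ and size at least $m$, choose $G$ to have maximum remoteness, and among those of maximum remoteness, maximum size. Let $v$ be a vertex with $\overline{\sigma}(v,G)=\rho(G)$, put $d={\rm ecc}_G(v)$, and let $N_0,N_1,\ldots,N_d$ be the distance levels from $v$ with $n_i=|N_i|$, so that $n_0=1$ and $\sum_i n_i = n$.

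The first step is a structural reduction identical in spirit to Claim~1 of the main theorem. Since no vertex of $N_j$ can be adjacent to a vertex of $N_i$ with $j\ge i+2$ (such an edge would place a vertex of $N_j$ at distance less than $j$ from $v$), and since maximality of the size forces every edge within a level and between consecutive levels to be present, $G$ must be the sequential sum $K_{n_0}+K_{n_1}+\cdots+K_{n_d}$; adding these missing edges leaves all distances from $v$, and hence $\rho(G)$, unchanged while not decreasing the size, so this normalisation is free. For such a sequential sum the only inclusion-minimal edge cuts separate $N_{\le i}$ from $N_{>i}$ and have size $n_i n_{i+1}$, so $\lambda$-edge-connectivity is equivalent to the multiplicative condition $n_i n_{i+1}\ge \lambda$ for all $i\in\{0,1,\ldots,d-1\}$ (together with the minimum-degree condition, which is implied). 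In particular $n_1\ge\lambda$.

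The heart of the argument is then a thinning/exchange procedure showing that the internal level sizes must realise one of the three patterns of Definition~\ref{deflambda}. Because the remoteness of a sequential sum of cliques rooted at $v$ grows with the length $d$ of the path while the size grows when any level is made fatter, one wants the internal levels as small as the constraint $n_i n_{i+1}\ge\lambda$ permits; for $\lambda=2$ this favours the alternating pattern $1,2,1,2,\ldots$, and for $\lambda=3$ the pattern $1,3,1,3,\ldots$, which are precisely the blocks $[K_1+K_\lambda]^k$. Whenever an internal configuration deviates from these patterns I would move a vertex from a fatter level to a neighbouring level (as in Claims~2 and~3), checking that the move keeps $G$ $\lambda$-edge-connected, does not decrease $\sigma(v,G)$, and does not decrease the size, thereby contradicting maximality unless $G$ already has path-complete form. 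Once $G$ is known to be a $\lambda$-edge-connected path-complete graph, the conclusion follows from the monotonicity principle — the graph analogue of Lemma~\ref{la:size-vs-remoteness-in-kappa-connected-pc-digraph}(a),(b) — namely that among path-complete graphs of fixed order smaller size corresponds to larger remoteness, so the representative of size at least $m$ with maximum remoteness is the minimum-size one, $PK^{\lambda}_{n,m}$, giving $\rho(G)\le\rho(PK^{\lambda}_{n,m})$.

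The hard part will be the exchange analysis for $\lambda=3$. Unlike the additive vertex-connectivity constraint $n_i\ge\kappa$, the product constraint $n_i n_{i+1}\ge 3$ admits genuinely different near-optimal patterns — for instance a lone $K_2$ block can legitimately sit between wider blocks — which is exactly why Definition~\ref{deflambda} needs the extra cases $[K_1+K_\lambda]^k+K_1+K_a+K_b$ and $[K_1+K_3]^k+K_2+K_a+K_1$. Ruling out or correctly placing such $K_2$ blocks, and handling the two end-blocks $K_a,K_b$ where the conditions $ab\ge\lambda$ and $a\ge\lambda$ become active, requires a careful finite case check rather than a single clean exchange, and this is the step where the edge-connectivity setting is substantially more delicate than the vertex-connectivity one.
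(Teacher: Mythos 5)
Before anything else: this paper does not prove Theorem~\ref{theo: remoteness of edge connectivity}. The theorem is quoted, with attribution, from \cite{DanMafMal2025}, and is used here only as a black box in the proof of Theorem~\ref{th:rho(2-edge D)}. There is therefore no in-paper proof to compare your attempt against; the closest internal reference point is the Section~3 argument for $\kappa$-connected digraphs, and your proposal does follow that template (choose $G$ of maximum remoteness and, subject to that, maximum size; reduce to a sequential sum of cliques; run vertex-moving exchanges; finish with a size-versus-remoteness monotonicity statement for path-complete graphs). As a roadmap this is sensible and very likely parallel to the actual proof in \cite{DanMafMal2025}.

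As a proof, however, it has genuine gaps, all in the places you yourself flag. (i) The claim that the only inclusion-minimal edge cuts of $K_{n_0}+K_{n_1}+\cdots+K_{n_d}$ are the level cuts between $N_{\leq i}$ and $N_{\geq i+1}$ is false: the set of edges incident with one vertex of an internal clique $K_{n_i}$, $n_i\geq 2$, is an inclusion-minimal edge cut which is not a level cut. For $\lambda\in\{2,3\}$ the product conditions do imply $\delta\geq\lambda$, as you note, but the assertion that $\lambda$-edge-connectivity is \emph{equivalent} to $n_in_{i+1}\geq\lambda$ for all $i$ still requires an argument that no other cut is smaller. (ii) Decisively, the exchange analysis that constitutes the theorem is described, not done, and the vertex-connectivity bookkeeping does not transfer. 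Moving a vertex from $N_j$ to $N_{j+1}$ changes the size by $n_{j+2}-n_{j-1}$; in the $\kappa$-connected proof this is nonnegative because minimality of $j$ forces $n_{j-1}=\kappa\leq n_{j+2}$, but under the multiplicative constraint it can be negative. For example, the level sequence $1,3,2,3,1,3,1$ yields a $3$-edge-connected sequential sum of cliques matching none of the shapes in Definition~\ref{deflambda}; moving a vertex of the stray $K_2$ forward increases $\sigma(v,\cdot)$ but \emph{decreases} the size by $2$, so if the size of $G$ is close to $m$ the modified graph leaves the class and no contradiction with extremality arises --- here one must instead thin $N_1$. Which exchange is available depends on the configuration (and on the end blocks, where $ab\geq\lambda$ versus $a\geq\lambda$ becomes active), and organising these cases is exactly the missing content. (iii) The final step invokes, without proof, the $\lambda$-edge-connected analogue of Lemma~\ref{la:size-vs-remoteness-in-kappa-connected-pc-digraph}(a),(b); since Definition~\ref{deflambda} admits several distinct shapes, proving that any two $\lambda$-edge-connected path-complete graphs of the same order are nested under edge addition --- which is what makes $PK^{\lambda}_{n,m}$ well-behaved and extremal --- is itself a nontrivial case analysis. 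In short: right strategy, correctly located difficulties, but what is omitted is the theorem.
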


\begin{theorem}\label{theo:remoteness-of-edge-connected-graphs}
{\rm (Dankelmann, Mafunda, Mallu~\cite{DanMafMal2025})}\\
(a) Let $G$ be a $2$-edge-connected graph of order $n$ and size $m$. 
Then
\[ \rho(G) \leq \left\{ \begin{array}{cc}
   \frac{n}{3} & \textrm{if $m <\lceil \frac{5}{3}n \rceil -2$,} \\
   \frac{n}{3} - \frac{2m}{3(n-1)} +\frac{5}{3} 
         & \textrm{if $m  \geq \lceil \frac{5}{3}n \rceil -2$},
      \end{array} \right. \]
and this bound is sharp apart from an additive constant. \\
(b) Let $G$ be a $3$-edge-connected graph of order $n$ and size $m$. 
Then
\[ \rho(G) \leq \left\{ \begin{array}{cc}
   \frac{n}{4} & \textrm{if $m < \lceil \frac{9}{4}n\rceil -2$,} \\
   \frac{n}{4} - \frac{m}{2(n-1)} + \frac{3}{2} 
        & \textrm{if $m \geq \lceil \frac{9}{4}n\rceil -2$,}
      \end{array} \right. \]
and this bound is sharp apart from an additive constant. 
\end{theorem}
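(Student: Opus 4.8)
The plan is to deduce Theorem~\ref{theo:remoteness-of-edge-connected-graphs} from Theorem~\ref{theo: remoteness of edge connectivity}, which already performs the hardest reduction: for every $\lambda$-edge-connected graph $G$ of order $n$ and size at least $m$ one has $\rho(G)\le\rho(PK^{\lambda}_{n,m})$. Hence it suffices to evaluate the remoteness of the single graph $PK^{\lambda}_{n,m}$ and to show that it agrees, up to an additive constant, with the right-hand sides in (a) and (b). First I would identify the vertex realizing $\rho(PK^{\lambda}_{n,m})$: by the structure in Definition~\ref{deflambda} this is a vertex $v_0$ in the leftmost $K_1$, and the distance layers $N_i$ from $v_0$ have sizes following the periodic pattern $1,\lambda,1,\lambda,\dots$ produced by the blocks $[K_1+K_\lambda]^k$, terminated by the cliques $K_a$ and $K_b$ at the two largest distances.

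Next I would compute the transmission $\sigma(v_0)=\sum_i i\,n_i$ in closed form, splitting it into the contribution of the periodic part and that of the terminal cliques. Summing the resulting arithmetic-type series over the $k$ periods gives a leading term of order $k^2$; combining it with the order identity $n=(1+\lambda)k+a+b$ and dividing by $n-1$ yields the leading behaviour $\tfrac{n}{\lambda+1}$, that is $\tfrac{n}{3}$ for $\lambda=2$ and $\tfrac{n}{4}$ for $\lambda=3$.

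The case distinction on $m$ then mirrors how the minimum-size extremal graph $PK^{\lambda}_{n,m}$ varies with $m$. For $m$ below the threshold $\lceil\frac{5}{3}n\rceil-2$ (respectively $\lceil\frac{9}{4}n\rceil-2$) the extremal graph is the longest and thinnest admissible one, with $K_a,K_b$ as small as Definition~\ref{deflambda} permits; here $k$ is maximal, $\sigma(v_0)$ does not depend on $m$, and the remoteness equals the threshold value $\tfrac{n}{3}$ (respectively $\tfrac{n}{4}$) up to an additive constant. Once $m$ exceeds the threshold, the extra edges can only be absorbed by enlarging a terminal clique, which shortens the path; I would show that each enlargement trades a controlled increase in $m$ for a fixed decrease in $k$, producing the linear term $-\frac{2m}{3(n-1)}$ (respectively $-\frac{m}{2(n-1)}$) and the additive constants $\frac{5}{3}$ (respectively $\frac{3}{2}$). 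Computing the size of the longest-path configuration then identifies the threshold itself, since that configuration has size $\tfrac{5}{3}n$ (respectively $\tfrac{9}{4}n$) up to rounding.

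The main obstacle is the precise determination of $PK^{\lambda}_{n,m}$ as a function of $m$: among the three forms in Definition~\ref{deflambda} one must decide which attains the minimum size for each range of $m$, and track exactly how vertices migrate from the periodic part into the terminal cliques as $m$ increases. The floor and ceiling effects on $k,a,b$ that this produces are precisely what force the ``sharp apart from an additive constant'' qualifier. My plan is therefore to carry out the transmission computation for the generic configuration, to verify sharpness by exhibiting the longest-path graph for small $m$ and the single-enlarged-clique graph for large $m$, and to absorb all boundary corrections into an $O(1)$ term rather than chasing an exact closed form.
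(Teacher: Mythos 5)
A preliminary remark: the paper you were given never proves this statement --- it is quoted from \cite{DanMafMal2025} and used as a black box in Section~4 --- so your proposal can only be judged on its own merits and against the architecture of the cited source. Your high-level route is the natural and essentially the intended one: invoke Theorem~\ref{theo: remoteness of edge connectivity} to reduce everything to the single family $PK^{\lambda}_{n,m}$, then evaluate its remoteness via the layer structure $1,\lambda,1,\lambda,\ldots$ followed by the terminal cliques, the identity $n=(1+\lambda)k+a+b$, and the trade-off between the number of periods $k$ and the size $m$; your identification of the thresholds from the edges-per-period counts ($\tfrac{5}{3}$ per vertex for $\lambda=2$, $\tfrac{9}{4}$ for $\lambda=3$) is also correct.

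The genuine gap is your closing decision to ``absorb all boundary corrections into an $O(1)$ term rather than chasing an exact closed form.'' The additive-constant licence in the theorem attaches only to the sharpness claim, i.e.\ to the lower-bound constructions; the upper bounds themselves, $\rho(G)\le \frac{n}{3}$ and $\rho(G)\le \frac{n}{3}-\frac{2m}{3(n-1)}+\frac{5}{3}$ (and their $\lambda=3$ analogues), are exact inequalities, so evaluating $\rho(PK^{\lambda}_{n,m})$ only up to $O(1)$ proves a strictly weaker theorem. Moreover, there is no slack to hide behind: take $\lambda=2$ and $n\equiv 1 \pmod 3$, say $n=3k+4$. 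The minimum-size admissible graph is then the second form of Definition~\ref{deflambda}, namely $[K_1+K_2]^{k}+K_1+K_2+K_1$; its transmission from the end vertex $v_0$ is $\sum_{i=0}^{k-1}(6i+2)+2k+2(2k+1)+(2k+2)=3k^2+7k+4=n(k+1)$, so its remoteness is $n(k+1)/\bigl(3(k+1)\bigr)=\frac{n}{3}$ exactly, while its size $5k+5=\frac{5(n-1)}{3}$ equals the threshold $\lceil\frac{5}{3}n\rceil-2$ exactly. Thus in the sub-threshold regime the bound $\frac{n}{3}$ is attained with equality, and any evaluation accurate only to within an additive constant cannot recover the stated inequality. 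The real content of the proof --- which your plan explicitly defers --- is the exact case analysis: determining $PK^{\lambda}_{n,m}$ among the three forms of Definition~\ref{deflambda} for every residue class of $n$ and every range of $m$, and verifying exactly, not asymptotically, that its remoteness never exceeds the stated right-hand sides.
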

 
\begin{theorem}\label{th:rho(2-edge D)}
Let $ D $ be a $\lambda$-edge-connected Eulerian digraph of order $ n $ and size at least $ 2m_{0} $, where $m_0 \in \mathbb{N}$  and $\lambda\in\{2,3\}$. Then 
\[\rho(D) \leq \; \rho(\overleftrightarrow{PK^{\lambda}_{n,m_0}}). \]
This bound is sharp.
\end{theorem}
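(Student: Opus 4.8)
The plan is to mirror the proof of Theorem \ref{th:rho(vertex connectivity - D)} almost verbatim, replacing the vertex-connectivity input (Theorem \ref{theo:remoteness-of-kappa-connected-graphs}) by the edge-connectivity input (Theorem \ref{theo: remoteness of edge connectivity}). First I would take an arbitrary $\lambda$-edge-connected Eulerian digraph $D$ of order $n$ and size at least $2m_0$, pick a vertex $v$ realizing the remoteness, and record its distance degree $X_D(v)=(n_0,n_1,\ldots,n_d)$. Following Theorem \ref{th:m(D)}, I would pass to the ``canonical'' Eulerian digraph
\[
D' = \overleftrightarrow{K_{n_0}} + \overleftrightarrow{K_{n_1}} + \cdots + \overleftrightarrow{K_{n_d}},
\]
which has the same order and the same distance degree at $v$, hence the same remoteness, and by Theorem \ref{th:m(D)} satisfies $m(D')\ge m(D)\ge 2m_0$.

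Next I would pass to the underlying graph $G'$ of $D'$. Since $X_{G'}(v)=X_{D'}(v)$ we get $\rho(G')=\rho(D')=\rho(D)$, and because $D'$ is a symmetric (two-arcs-per-edge) digraph we have $m(D')=2m(G')$, whence $m(G')\ge m_0$. Applying Theorem \ref{theo: remoteness of edge connectivity} to $G'$ gives $\rho(G')\le \rho(PK^{\lambda}_{n,m_0})$, and since $\rho(\overleftrightarrow{PK^{\lambda}_{n,m_0}})=\rho(PK^{\lambda}_{n,m_0})$ (taking the symmetric orientation preserves all distances and hence the remoteness), the chain
\[
\rho(D)=\rho(D')=\rho(G')\le \rho(PK^{\lambda}_{n,m_0})=\rho(\overleftrightarrow{PK^{\lambda}_{n,m_0}})
\]
yields the bound.

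The main obstacle, and the step that is genuinely different from the vertex-connectivity case, is verifying that $G'$ is itself $\lambda$-edge-connected so that Theorem \ref{theo: remoteness of edge connectivity} actually applies. In the vertex-connectivity proof the authors asserted $\kappa$-connectivity of $G'$ from the equality of distance degrees, but edge-connectivity does not transfer so cleanly: replacing each pair of opposite arcs of $D'$ by a single edge could in principle lower the edge-connectivity, and one must check that the layered complete-join structure of $D'$ forces $\lambda(G')\ge\lambda$. I would argue this directly from the structure of $D'=\overleftrightarrow{K_{n_0}}+\cdots+\overleftrightarrow{K_{n_d}}$: between consecutive layers $N_i,N_{i+1}$ the underlying graph $G'$ contains a complete bipartite join $K_{n_i,n_{i+1}}$, and since $D$ is $\lambda$-edge-connected we have $n_i\cdot n_{i+1}\ge\lambda$ (indeed the arcs crossing any layer cut number at least $\lambda$, and these crossing edges in $G'$ likewise form an edge cut of size at least $\lambda$); a careful count of the minimum edge cut of $G'$, separating either a single vertex or a union of layers, then shows $\lambda(G')\ge\lambda$. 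A small amount of care is also needed at the endpoints $n_0=1$ and the last two layers, exactly the places where the three cases in Definition \ref{deflambda} originate, but these reduce to the same boundary checks already handled in \cite{DanMafMal2025}.

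Finally, for sharpness I would exhibit $\overleftrightarrow{PK^{\lambda}_{n,m_0}}$ itself: it is a $\lambda$-edge-connected Eulerian digraph of size exactly $2m_0$ attaining equality, so the bound is sharp in the same ``apart from an additive constant'' sense inherited from Theorem \ref{theo:remoteness-of-edge-connected-graphs}.
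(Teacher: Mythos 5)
Your proposal is correct and follows essentially the same route as the paper's proof: the same canonical digraph $D' = \overleftrightarrow{K_{n_0}} + \cdots + \overleftrightarrow{K_{n_d}}$, the same passage to the underlying graph $G'$, the same application of Theorem~\ref{theo: remoteness of edge connectivity}, and the same sharpness example $\overleftrightarrow{PK^{\lambda}_{n,m_0}}$. The one place you go beyond the paper is the explicit verification that $\lambda(G') \geq \lambda$ via the inequality $n_i n_{i+1} \geq \lambda$ and the layered clique structure --- the paper asserts this merely from equality of distance degrees, which by itself is not a sufficient reason, so your added care closes a step the paper leaves terse.
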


\begin{proof}
Let $D$ be an arbitrary $\lambda$-edge-connected Eulerian digraph of order $n$ and size at least $2m_{0}$ with $\lambda\in\{2,3\}$. Assume $v$ is chosen such that $\rho(D)=\overline{\sigma}(v,D)$. Let $X_{D}(v)=(n_{0},n_{1},n_{2},\ldots,n_{d})$ be the distance degree of $v$, where $d \in \mathbb{Z}^{+}$. Furthermore, note that for all $ i \in \{0,1,2, \ldots, d-1\} $, we have that $ n_in_{i+1} \geq \lambda $ since $N_i \cup N_{i+1}$ is an edge-cut. \\

\noindent Let $D'$ be the digraph  $\overleftrightarrow{K_{n_0}} + \overleftrightarrow{K_{n_1}} + \ldots + \overleftrightarrow{K_{n_{d-2}}} + \overleftrightarrow{K_{n_{d-1}}} + \overleftrightarrow{K_{n_d}}$. Clearly the digraph $D'$ is Eulerian with the same order as $D$ and $D'$ is also $\lambda$-edge-connected since $X_{D}(v)=X_{D'}(v)$. Furthermore, $\rho(D')=\rho(D)$ and by Theorem \ref{th:m(D)} we have that  $m(D') \geq m(D) \geq 2m_0$.\\

\noindent Consider the underlying graph $G'$ of $D'$.
Note that the graph $G'$ is $\lambda$-edge-connected since  $X_{G}(v)=X_{D'}(v)$ and $\rho(G')=\rho(D')$. Since $m(D') = 2m(G')$, we have that the size $m(G') \geq m_0$ and by Theorem \ref{theo: remoteness of edge connectivity}, we have that $\rho(G') \leq \rho(PK^{\lambda}_{n,m_0})$. \\

\noindent Now we consider the digraph $\overleftrightarrow{PK^{\lambda}_{n,m_0}}$. Note that $\overleftrightarrow{PK^{\lambda}_{n,m_0}}$ is also $\lambda$-edge-connected Eulerian digraph with size at least $2m_0$ since $m(\overleftrightarrow{PK^{\lambda}_{n,m_0}})=2m(PK^{\lambda}_{n,m_0})$  and further $\rho(PK_{PK^{\lambda}_{n,m_0}})=\rho(\overleftrightarrow{PK^{\lambda}_{n,m_0}})$. Hence we obtain that 

\[ \rho(D) = \rho(D') = \rho(G') 
       \leq \rho(PK^{\lambda}_{n,m_0})
    = \rho(\overleftrightarrow{PK^{\lambda}_{n,m_0}}),
\]
where both $D$ and $\overleftrightarrow{PK^{\lambda}_{n,m_0}}$ are $\lambda$-edge-connected with size at least $2m_0$. This completes the proof.
\end{proof} 
%\newpage

The following corollary is an immediate consequence of Theorem \ref{th:rho(2-edge D)}.

\begin{corollary}
(a) Let $ D $ be a $2$-edge-connected Eulerian digraph of order $ n $ and size at least $ 2m_{0} $, where $m_0 \in \mathbb{N}$. Then
\[ \rho(D) \leq \left\{ \begin{array}{cc}
   \frac{n}{3} & \textrm{if $m_0 <\lceil \frac{5}{6}n \rceil -1$,} \\
   \frac{n}{3} - \frac{2m_0}{3(n-1)} +\frac{5}{3} 
         & \textrm{if $m_0  \geq \lceil \frac{5}{6}n \rceil -1$},
      \end{array} \right. \]
and this bound is sharp apart from an additive constant. \\
(b) Let $ D $ be a $3$-edge-connected Eulerian digraph of order $ n $ and size at least $ 2m_{0} $, where $m_0 \in \mathbb{N}$. Then
\[ \rho(G) \leq \left\{ \begin{array}{cc}
   \frac{n}{4} & \textrm{if $m_0 < \lceil \frac{9}{8}n\rceil -1$,} \\
   \frac{n}{4} - \frac{m_0}{2(n-1)} + \frac{3}{2} 
        & \textrm{if $m_0 \geq \lceil \frac{9}{8}n\rceil -2$,}
      \end{array} \right. \]
and this bound is sharp apart from an additive constant. 
\end{corollary}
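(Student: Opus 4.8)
The plan is to derive the corollary directly from Theorem~\ref{th:rho(2-edge D)} by substituting into it the explicit evaluation of $\rho(PK^{\lambda}_{n,m_0})$ supplied by Theorem~\ref{theo:remoteness-of-edge-connected-graphs}. The first observation I would record is that doubling each edge of a graph into a pair of oppositely directed arcs leaves all directed distances unchanged; hence $\overline{\sigma}(v,\overleftrightarrow{G})=\overline{\sigma}(v,G)$ for every vertex $v$, and in particular $\rho(\overleftrightarrow{PK^{\lambda}_{n,m_0}})=\rho(PK^{\lambda}_{n,m_0})$. Combining this with the inequality $\rho(D)\le \rho(\overleftrightarrow{PK^{\lambda}_{n,m_0}})$ from Theorem~\ref{th:rho(2-edge D)} reduces the problem entirely to evaluating the remoteness of the graph $PK^{\lambda}_{n,m_0}$.

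For the evaluation step I would invoke Theorem~\ref{theo:remoteness-of-edge-connected-graphs}, which already records the value of $\rho(PK^{\lambda}_{n,m})$ as a graph invariant (up to the stated additive constant). Since $PK^{\lambda}_{n,m_0}$ is by definition the minimum-size $\lambda$-edge-connected path-complete graph of order $n$ whose size is at least $m_0$, its remoteness is obtained by setting $m=m_0$ in the two-case formula of Theorem~\ref{theo:remoteness-of-edge-connected-graphs}. For $\lambda=2$ this yields $\rho(PK^{2}_{n,m_0})=\tfrac{n}{3}-\tfrac{2m_0}{3(n-1)}+\tfrac{5}{3}$ once $m_0$ is large enough, and the constant value $\tfrac{n}{3}$ below the transition; the case $\lambda=3$ is identical with the corresponding $\lambda=3$ formula. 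Chaining the two inequalities then gives the displayed bounds on $\rho(D)$.

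The step I expect to require the most care is pinning down the correct transition threshold for $m_0$ together with the accompanying sharpness claim. The threshold in the graph theorem is expressed in terms of the size of the underlying graph, whereas the corollary must be phrased in terms of $m_0$, which governs the arc count $2m_0$ of the digraph. I would therefore track exactly when the minimum-size $\lambda$-edge-connected path-complete graph of order $n$ first exceeds size $m_0$, equivalently, when the long string of $[K_1+K_{\lambda}]$ blocks giving the diameter-maximising (hence remoteness-maximising) structure can no longer be realised, and convert the graph thresholds $\lceil\tfrac{5}{3}n\rceil-2$ and $\lceil\tfrac{9}{4}n\rceil-2$ into the stated $m_0$-thresholds; this bookkeeping is the one genuinely computational obstacle in an otherwise formal deduction.

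For sharpness I would exhibit $\overleftrightarrow{PK^{\lambda}_{n,m_0}}$ itself: being a symmetric digraph it has equal in- and out-degree at every vertex and is therefore a $\lambda$-edge-connected Eulerian digraph of order $n$ and size exactly $2m_0$, and its remoteness equals the right-hand side by the distance-preserving identity above. Thus equality, apart from the additive constant inherited from Theorem~\ref{theo:remoteness-of-edge-connected-graphs}, is attained, and no argument beyond these substitutions is needed once the threshold conversion has been carried out.
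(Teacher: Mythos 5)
Your overall strategy --- chaining $\rho(D)\le\rho(\overleftrightarrow{PK^{\lambda}_{n,m_0}})=\rho(PK^{\lambda}_{n,m_0})$ from Theorem~\ref{th:rho(2-edge D)} with the explicit graph bounds of Theorem~\ref{theo:remoteness-of-edge-connected-graphs}, and exhibiting $\overleftrightarrow{PK^{\lambda}_{n,m_0}}$ for sharpness --- is exactly the deduction the paper intends (the paper gives no written proof, calling the corollary an immediate consequence of Theorem~\ref{th:rho(2-edge D)}). However, the step you yourself flag as the crux, the threshold conversion, is left undone, and the method you describe for it would not produce the stated thresholds. The graph $PK^{\lambda}_{n,m_0}$ has size $m^{*}\ge m_0$, where $m^{*}$ is at least the minimum size of a $\lambda$-edge-connected path-complete graph of order $n$, which is about $\lceil\tfrac{5}{3}n\rceil-2$ (resp.\ $\lceil\tfrac{9}{4}n\rceil-2$). ``Tracking when this minimum first exceeds $m_0$,'' as you propose, places the transition at $m_0\approx\lceil\tfrac{5}{3}n\rceil-2$, whereas the corollary's transition is at $m_0\approx\lceil\tfrac{5}{6}n\rceil-1$, roughly \emph{half} of that. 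These do not coincide, and no substitution $m=m_0$ bridges them.

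To prove the corollary as stated you must treat the intermediate range $\lceil\tfrac{5}{6}n\rceil-1\le m_0<\lceil\tfrac{5}{3}n\rceil-2$ separately: there $PK^{2}_{n,m_0}$ is the longest admissible chain, so the argument only yields $\rho(D)\le\tfrac{n}{3}$, and one must verify that this implies the claimed second-case bound, i.e.\ that $\tfrac{n}{3}\le\tfrac{n}{3}-\tfrac{2m_0}{3(n-1)}+\tfrac{5}{3}$, which holds precisely because $m_0\le\tfrac{5}{2}(n-1)$ throughout this range (and analogously $m_0\le 3(n-1)$ in case (b)). Without this monotonicity check the formula bound is unproved on that range, so your deduction as written does not yield the statement. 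Two smaller inaccuracies: Theorem~\ref{theo:remoteness-of-edge-connected-graphs} gives only an upper bound and must be applied with the true size $m^{*}$ of $PK^{\lambda}_{n,m_0}$, then weakened via $m^{*}\ge m_0$, rather than ``setting $m=m_0$''; and your sharpness example $\overleftrightarrow{PK^{\lambda}_{n,m_0}}$ has $2m^{*}\ge 2m_0$ arcs, not exactly $2m_0$, which is harmless but should be said correctly.
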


\section*{Acknowledgements}
The author wishes to express sincere gratitude to Professor Peter Dankelmann and Assistant Professor Sonwabile Mafunda for their constructive feedback and insightful suggestions on this manuscript, which contributed greatly to refining and enhancing its presentation.

\end{document}